\numberwithin{equation}{section}
\def\R{{\bf R}}
\def\N{{\bf N}}
\def\d{\displaystyle}
\def\e{{\varepsilon}}
\def\wt{\widetilde}
\def\p{\partial}
\newcommand{\dt}{\partial_{t}}%
\newcommand{\ga}{\gamma}
\newcommand{\si}{\sigma}
\newcommand{\I}{\infty}
\newcommand{\EQS}[1]{\begin{align} #1 \end{align}}
\newtheorem{thm}{Theorem}[section]
\newtheorem{lem}{Lemma}[section]
\newtheorem{prop}{Proposition}[section]
\newtheorem{rem}{Remark}[section]
\newtheorem{definition}{Definition}[section]
\title{Small data blow-up for the wave equation with a time-dependent scale invariant damping \\and a cubic convolution \\for slowly decaying initial data}
\author{
Masahiro Ikeda
\footnote{Department of Mathematics, Faculty of Science and Technology, Keio University, 3-14-1 Hiyoshi,
Kohoku-ku, Yokohama, 223-8522, Japan/Center for Advanced Intelligence Project, RIKEN, Japan. email: masahiro.ikeda@keio.jp/masahiro.ikeda@riken.jp.}
,
Tomoyuki Tanaka
\footnote{Graduate School of Mathematics, Nagoya University, Chikusa-ku, Nagoya, 464-8602, Japan/Center for Advanced Intelligence Project, RIKEN, Japan. e-mail: d18003s@math.nagoya-u.ac.jp/tomoyuki.tanaka.hw@riken.jp.}
\ and \
Kyouhei Wakasa
\footnote{Department of Creative Engineering, National Institute of Technology, Kushiro College, 2-32-1 Otanoshike-Nishi, Kushiro-Shi, Hokkaido 084-0916, Japan. e-mail: wakasa@kushiro-ct.ac.jp.
}
}
\date{
\[
\begin{array}{llll}
\mbox{\footnotesize{\bf Keywords:}}
& \mbox{\footnotesize Wave equation, Time-dependent scale invariant damping,}\\
& \mbox{\footnotesize Cubic convolution, Small data blow-up,}\\
& \mbox{\footnotesize Slowly decaying initial data}\\
\mbox{\footnotesize{\bf MSC2010:}}
& \mbox{\footnotesize Primary 35B44 Secondary, 35L05, 35L71}\\
\end{array}
\]
}
\begin{document}
\maketitle
\begin{abstract}
In the present paper, we study the Cauchy problem for the wave equation with a time-dependent scale invariant damping, i.e.$\frac{2}{1+t}\dt v$ and a cubic convolution $(|x|^{-\ga}*v^2)v$ with $\ga\in (0,n)$, where $v=v(x,t)$ is an unknown function on $\R^n\times[0,T)$. Our aim of the present paper is to prove a small data blow-up result and show an upper estimate of lifespan of the problem for slowly decaying positive initial data $(v(x,0),\partial_tv(x,0))$ such as $\partial_tv(x,0)=O(|x|^{-(1+\nu)})$ as $|x|\rightarrow\infty$. Here $\nu$ belongs to the scaling supercritical case $\nu<\frac{n-\gamma}{2}$. The proof of our main result is based on the combination of the arguments in the papers \cite{TUW10} and \cite{T95}. Especially, our main new contribution is to estimate the convolution term in high spatial dimensions, i.e. $n\ge 4$. This paper is the first blow-up result to treat wave equations with the cubic convolution in high spatial dimensions ($n\ge 4$).
\end{abstract}

\tableofcontents


\section{Introduction}
\subsection{Setting of our problem and its background}
\ \ In the present paper, we study the Cauchy problem
for the wave equation with a time-dependent scale invariant damping and a cubic convolution:
\EQS{\label{IVP}
\begin{cases}
  \dt^2 v-\Delta v+\displaystyle{\frac{\mu}{1+t}}\dt v=(V_{\gamma}*v^2)v, &(x,t)\in\R^n\times [0,T),\\
  v(x,0)=\e f(x), &x\in\R^n,\\
  \dt v(x,0)=\e g(x), &x\in\R^n.
\end{cases}
}
Here $n\in \N$ denotes the spatial dimension, $T=T(\varepsilon)\in (0,\infty]$ denotes the maximal existence time of the function $v$, which is called lifespan, $V(x):=|x|^{-\gamma}$ is a given function on $\R^n$ and is called the inverse power potential, where $\gamma\in (0,n)$ is a constant, $*$ stands for the convolution in the space variables, $\mu$ is a non-negative constant, $v=v(x,t)$ is an unknown function on $\R^n\times [0,T)$, $(f,g)\in C^\I(\R^n)\times C^\I(\R^n)$ is a given $\R^2$-valued function on $\R^n$, which represents the shape of the initial data, and $\e>0$ is a small parameter, which denotes the size of the initial data.

Our aim of the present paper is to prove a small data blow-up result and show an upper estimate of lifespan $T_{\varepsilon}$ for small $\varepsilon$ of the problem with slowly decaying data $(f,g)$ such as $g(x)=O(|x|^{-(1+\nu)})$ as $|x|\rightarrow\infty$ (see \ref{slow-decay-2}), where $\nu$ belongs to the scaling supercritical case (see Theorem \ref{T.1.1}). Especially, our main new contribution of the present paper is to estimate the convolution term in high space dimensions, i.e. $n\ge 4$. And our main result is the first blow-up result to treat wave equations with the cubic convolution in high space dimensions ($n\ge 4$).

In the physical context, the stationary problem corresponding to \eqref{IVP} with a mass term and the Coulomb potential ($\ga=1$)
\[
    -\Delta v+v=(|x|^{-1}*|v|^2)v,\ \ \ x\in \R^n
\]
was proposed by Hartree as a model for the helium atom. Menzala and Strauss \cite{MS82} studied the Cauchy problem of \eqref{IVP} with more general potential than the inverse power potential $|x|^{-\gamma}$ and without the dissipative term ($\mu=0$) and proved local well-posedness result and small data scattering result in the energy space $H^1(\R^n)\times L^2(\R^n)$, where $H^1(\R^n)$ denotes the usual $L^2$-based Sobolev space.

The first equation of \eqref{IVP} is invariant under the scale transformation $v\mapsto v_{\sigma}$ for $\sigma>0$ given by
\begin{equation}
\label{scale}
  v_{\sigma}(x,t):=\si^{1+\frac{n-\gamma}{2}}v(\si x,\si(1+t)-1).
\end{equation}
Therefore the damping term $\frac{\mu}{1+t}\partial_tu$ is called the scale invariant damping term and is known as a threshold between“wave-like” region and “heat-like” region.

%

\subsection{Known results}
\ \ For the undamped case $(\mu=0)$ with a replacement of the cubic convolution into the power type nonlinearity $|v|^p$ with $p>1$,
i.e.,
\EQS{\label{eq_wave1}
  \begin{cases}
    \dt^2 v-\Delta v= |v|^p, &(x,t)\in\R^n\times (-T,T),\\
    v(x,0)=\e f(x), &x\in\R^n,\\
    \dt v(x,0)=\e g(x), &x\in\R^n,
  \end{cases}
}
determining a critical exponent which divides global existence and blow-up for small solutions has been extensively studied by many authors. This problem is called the Strauss conjecture. For historical backgrounds of this conjecture and detailed estimates of lifespan $T=T_{\varepsilon}$, see Introduction in \cite{TW14} and \cite{ISW1} for example. It is well known that the critical exponent for \eqref{eq_wave1} for sufficiently rapidly decaying initial data as $|x|\rightarrow\infty$ is the Strauss exponent $p_0(n)$, which is defined by
\begin{align}
\label{strauss}
	p_{0}(n):=
	\left\{
	\begin{array}{ll}
		\infty, & (n=1),\\
		\d\frac{n+1+\sqrt{n^2+10n-7}}{2(n-1)}, & (n \geq 2),
	\end{array}
	\right.
\end{align}
and is the positive root of the quadratic equation
\begin{align}
\label{gamma-st}
	(n-1)p^2-(n+1)p-2=0.
\end{align}
In other words, small data global existence holds if $p>p_0(n)$,
and small data blow-up holds if $1<p\le p_0(n)$ for sufficiently rapidly decaying initial data as $|x|\to\I$.

Our main concern in the present paper is slowly decaying initial data as $|x|\rightarrow\infty$ such as
\begin{equation}
\label{slow-decay-1}
f(x)=O(|x|^{-\nu}),\quad g(x)=O(|x|^{-1-\nu})\quad
\mbox{as} \ |x|\rightarrow \infty,
\end{equation}
where $\nu>0$ is a positive constant and denotes the speed of the spatial decay.
In three spatial dimensions ($n=3$), Asakura \cite{Asa86} studied the problem (\ref{eq_wave1}) and showed a small data global existence if $\nu> \frac{2}{p-1}$ (scaling subcritical case) and $p>p_0(3)$. Whereas, he also proved a small data blow-up result for some radial data $(f,g)$ satisfying
\begin{equation}
\label{slow-decay-2}
f \equiv0, \quad g(x)\ge \frac{A}{(1+|x|)^{1+\nu}}
\end{equation}
with $0<\nu<\frac{2}{p-1}$ (scaling supercritical case) and $p>1$, where $A$ is a positive constant.
From his two results, we see that the critical decay exponent $\nu_c$ is $\frac{2}{p-1}$, namely
\[
    \nu_c=\nu_c(3,p):=\frac{2}{p-1}.
\]
For other related results of (\ref{eq_wave1}) with slowly decaying data, see \cite{T92, T93, T94, K93, KK95, T95, K96, KK98} for example. The critical decay exponent $\nu_c$ is related to the following scaling argument. The first equation of (\ref{eq_wave1}) is invariant under the scale transformation $v\mapsto v_{\vartheta}$ for $\vartheta>0$ given by
\[   v_{\vartheta}(x,t):={\vartheta}^{\frac{2}{p-1}}v(\vartheta x,\vartheta t).
\]
The critical decay exponent $\nu_c$ is same as the power of $\vartheta$ in the right hand side of above.


Next we recall related results for the undamped $(\mu=0)$ and the cubic convolution
case, i.e.,
\EQS{\label{eq_wave2}
  \begin{cases}
    \dt^2 v-\Delta v=(V_{\gamma}*v^2)v,&(x,t)\in\R^n\times (-T,T),\\
    v(x,0)=\e f(x), &x\in\R^n,\\
    \dt v(x,0)=\e g(x), &x\in\R^n,
  \end{cases}
}
where $n\in \N$, $V_{\gamma}(x):=|x|^{-\gamma}$ is the inverse power potential on $\R^n$ with $\gamma\in (0,n)$ and
$*$ stands for the convolution in the space variables.

Hidano \cite{H20} proved a small data scattering result for the problem (\ref{eq_wave2}) with $\gamma\in (2,\frac{5}{2})$ in three space dimensions ($n=3$) for smooth initial data decaying rapidly as $|x|\rightarrow \infty$.
On the other hand, he proved a small data blow-up result of (\ref{eq_wave2}) with $\gamma\in (0,2)$ for some positive initial data
with compact support. From his two results, a critical exponent $\gamma_c$, which devides global existence and blow-up for (\ref{eq_wave2}) with compactly supported initial data is $2$, namely,
\[
     \gamma_c=2.
\]
Tsutaya \cite{T03} studied the Cauchy problem (\ref{eq_wave2}) with the data $(f,g)$ satisfying the spatial decaying condition as $|x|\rightarrow\infty$ (\ref{slow-decay-1}). In \cite{T03}, he showed a small
data global existence for (\ref{eq_wave2}) with $\gamma\in (2,3)$ if the data $(f,g)$ satisfies (\ref{slow-decay-1}) with the scaling subcritical exponent $\left(\nu>\frac{5-\gamma}{2}\right)$. On the other hand, he proved a
small data blow-up of the problem (\ref{eq_wave2}) with $\gamma\in (0,3)$ for some data $(f,g)$ satisfying (\ref{slow-decay-2}) with the scaling supercritical exponent $\left(\frac{1}{2}<\nu<\frac{5-\gamma}{2}\right)$. From his two results in \cite{T03}, we see that the critical decay exponent $\nu_c$ is $\frac{5-\gamma}{2}$ in three
space dimensions, that is
\[
\nu_c=\nu_c(3,\gamma)=\frac{5-\gamma}{2}.
\]
This critical decay exponent $\nu_c=\nu_c(n,\gamma)$ is also related to a scale invariance of the first equation of (\ref{eq_wave2}). This situation is same as in the case of the power nonlinearity. Indeed, the first equation of (\ref{eq_wave2}) is invariant under the following scale transformation $v\mapsto v_{\sigma}$ for $\sigma>0$ given by
\begin{equation}
\label{scale2}
     v_{\sigma}(x,t):=\sigma^{1+\frac{n-\gamma}{2}}v(\sigma x,\sigma t).
\end{equation}
This transformation is similar to that of (\ref{scale}). The critical decay exponent $\nu_c$ is the power of $\sigma$ in the right hand side of (\ref{scale2}).

Kubo \cite{K04} studied the Cauchy problem (\ref{eq_wave2}) with the critical exponent $\gamma=\gamma_c=2$ and proved a small data global existence result for the data $(f,g)$ satisfying the decay condition (\ref{slow-decay-1}) with the scaling subcritical exponent $\nu\in (\frac{3}{2},2)$.

From the above results, we see that there exists a unique global solution to (\ref{eq_wave2}) with the critical case, i.e. $\ga=\ga_c=2$ for small initial data decaying rapidly as $|x|\rightarrow\infty$, whereas local solution to (\ref{eq_wave1}) with the critical exponent, i.e. $p=p_0(n)$ can not be extended globally for some positive data $(f,g)$ even if $\varepsilon$ is small and $(f,g)$ has a compact support.

We remark that Karageorgis and Tsutaya \cite{KT} reported a small data blow-up of (\ref{eq_wave2}) with the critical case, i.e. $\gamma=\gamma_c=2$ in three spatial dimensions for some data $(f,g)$ satisfying the decay condition (\ref{slow-decay-2}) with the critical decay exponent, i.e. $\nu=\nu_c=\frac{3}{2}$.

Next we recall several results for the following Cauchy problem with the scale invariant damping, i.e. $\frac{\mu}{1+t}\partial_t v$ and a power type nonlinearity, i.e. $|v|^p$;
\EQS{\label{IVP-scale}
  \begin{cases}
    \dt^2 v-\Delta v+\displaystyle{\frac{\mu}{1+t}}\dt v
      =|v|^p, & (x,t)\in\R^n\times [0,T),\\
      v(x,0)=\e f(x), &x\in\R^n,\\
      \dt v(x,0)=\e g(x), &x\in\R^n.
  \end{cases}
}
Recently, well-posedness and asymptotic behavior of solutions for the problem (\ref{IVP-scale}) have been extensively studied (see \cite{D15, Wakasugi14, LTW17, IkeSoba, ISW, TL1, TL2} for example). We only recall closely related results ($\mu=2$) to this study in the present paper. In order to study the problem (\ref{IVP-scale}) with a specific constant $\mu=2$, the Liouville transform $v\mapsto u$ given by
\begin{equation}
\label{Liou}
   u(x,t):=(1+t)^{\frac{\mu}{2}}v(x,t)
\end{equation}
is useful, where $v=v(x,t)$ is a solution to the problem \eqref{IVP-scale}. Then the transformed function $u$ satisfies the following equations:
\EQS{\label{IVP-1}
  \begin{cases}
    \dt^2 u-\Delta u+\displaystyle{\frac{\mu(2-\mu)}{4(1+t)^2}} u =\frac{|u|^p}{(1+t)^{\mu(p-1)/2}}, &(x,t)\in\R^n\times [0,T),\\
    u(x,0)=\e f(x), &x\in\R^n,\\
    \dt u(x,0)=\e\left\{\mu f(x)/2+g(x)\right\}, &x\in\R^n.
  \end{cases}
}
When $\mu=0$ or $\mu=2$, the mass term $\frac{\mu(2-\mu)}{4(1+t)^2}u$ vanishes and when $\mu=2$, the first equation of (\ref{IVP-1}) becomes the usual wave equation with a power nonlinearity $|u|^p$ with an additional time decay $\frac{1}{(1+t)^{p-1}}$, that is, the transformed function $u$ satisfies
\[
     \partial_t^2u-\Delta u=\frac{|u|^p}{(1+t)^{p-1}},\ \ \ (x,t)\in \R^n\times[0,T).
\]
In the case of $\mu=2$, it is proved in \cite{Wakasugi14, D15, DLR15} that the critical exponent $p_c=p_c(n)$ for $n=1,2,3$, which divides global existence and blow-up for small solutions for smooth initial data $(f,g)$ decaying rapidly as $|x|\rightarrow\infty$, is given by
\[
p_c(n)=\max\{ p_F(n),p_0(n+2)\}.
\]
See also \cite{LTW17, IkeSoba, ISW, TL1, TL2} for general positive $\mu$. Here $p_F=p_F(n)$ for $n\in \N$ is defined by
\begin{align}
\label{fujita}
	p_{F}(n):=1+\frac{2}{n}
\end{align}
and is called the Fujita exponent. This is the $L^1$-scaling critical exponent for the following semilinear heat (Fujita) equation
\[
\dt \theta-\Delta \theta=\theta^p,\ \ \ (x,t)\in \R^n\times [0,T),
\]
where $\theta=\theta(x,t)\ge 0$ is a positive function on $\R^n\times [0,T)$.

We turn back to the original problem (\ref{IVP}). There are no small data blow-up results about the problem (\ref{IVP}). As the first step of the study, in the present paper, we consider the case where the coefficient $\mu$ of the dissipative term is $2$, that is
\[
      \mu=2.
\]
Then in the same manner as (\ref{IVP-scale}), by using the Liouville transform $v\mapsto u$ (see \ref{Liou}) again, the transformed function $u$ satisfies the following equations:
\EQS{\label{IVP-2}
  \begin{cases}
    \dt^2 u-\Delta u=\displaystyle{\frac{(V_\ga*u^2)u}{(1+t)^2}}, &(x,t)\in\R^n\times [0,T),\\
    u(x,0)=\e f(x), &x\in\R^n,\\
    \dt u(x,0)=\e\left\{f(x)+g(x)\right\}, &x\in\R^n.
  \end{cases}
}
In this paper, we prove a small data blow-up result of (\ref{IVP-2}) with $n\ge 1$ and $\gamma\in (0,n)$ for data $(f,g)$ satisfying the spatial decay condition (\ref{slow-decay-2}) as $|x|\rightarrow\infty$ with the scaling supercritical exponent $\nu\in (0,\nu_c(n,\gamma,2))$. Here the scaling critical decay exponent $\nu_c=\nu_c(n,\gamma,\mu)$ for general $\mu$ is given by
\begin{equation}
\label{scaling}
   \nu_c=\nu_c(n,\gamma,\mu):=\frac{n+2-\mu-\gamma}{2}.
\end{equation}
The proof of the main result is based on the combination of the arguments in \cite{TUW10} and \cite{T95}. Especially, our new contribution of this paper is to estimate the convolution
term in high space dimensions $(n\ge4)$. And our main result is the first blow-up result to treat wave equations with the cubic convolution in high space dimensions ($n\ge 4$).

\section{Main Result}
\label{mainr}
\ \ In this section, we state our main result in the present paper. In the following we always assume that
\begin{equation}
\label{coeffi}
     \mu=2.
\end{equation}
Then since the original Cauchy problem (\ref{IVP}) is equivalent to the problem (\ref{IVP-2}) through the Liouville transform $v\mapsto u$, which is given by $u:=(1+t)v$ (see \ref{Liou}), we consider the latter problem (\ref{IVP-2}) below.

To state the result precisely, we introduce the definitions of solution and its lifespan and several notations.

The integral equation on $\R^n\times [0,T)$ associated with the Cauchy problem (\ref{IVP-2}) is
\begin{equation}
\label{IE_u_depend}
u(x,t)=\e u^0(x,t)+L\left((V_{\gamma}*u^2)u\right)(x,t),
\end{equation}
where the function $u^0:\R^n\times \R\rightarrow\R$ is defined by
\begin{equation}
\label{u^0}
u^0(x,t):=\p_t W(f|x,t)+W(f+g|x,t),
\end{equation}
and the integral operator $L$ on $C(\R^n\times [0,T))$ is defined by
\begin{equation}
\label{L}
L(F)(x,t):=
\int_0^tW\left(\frac{F(\cdot,s)}{(1+s)^2}
\middle|x,t-s\right)ds,
\end{equation}
where $F\in C(\R^n\times[0,T))$. Here $W$ is the solution operator to the free wave equation, which is defined by
\[
W(\phi|x,t):=
\frac{1}{(2m-1)!!}\left(\frac{1}{t}\frac{\p}{\p t}\right)^{m-1}
\left\{t^{2m-1}M(\phi|x,t)\right\}.
\]
For $m\in\N$ with $n=2m+1$ or $n=2m$, the operator $M$ is defined by
\begin{equation}
\label{M}
M(\phi|x,t):=
\left\{
\begin{array}{ll}
\d\frac{1}{\omega_{n}}\int_{|\omega|=1}\phi(x+t\omega)dS_{\omega}
&\mbox{for}\ n=2m+1,\\
\d\frac{2}{\omega_{n+1}}\int_{|\xi|\le 1}\frac{\phi(x+t\xi)}{\sqrt{1-|\xi|^2}}d\xi
&\mbox{for}\ n=2m.
\end{array}
\right.
\end{equation}
Here we denote by $\omega_n$ the Lebesgue measure of the unit sphere in $\R^n$, i.e.,
\begin{equation}
\label{units}
\omega_{n}:=\frac{2\pi^{n/2}}{\Gamma\left(n/2\right)}=
\left\{
\begin{array}{ll}
\d\frac{2(2\pi)^m}{(2m-1)!!} & \mbox{for}\ n=2m+1,\\
\d\frac{2\pi^m}{(m-1)!} & \mbox{for}\ n=2m,
\end{array}
\right.
\end{equation}
where $\Gamma:\R_{\ge 0}\rightarrow\R_{\ge 0}$ is the Gamma function defined by
\begin{equation}
\label{gamma}
\Gamma(\varrho):=\int_0^{\infty}e^{-\zeta}\zeta^{\varrho-1}d\zeta.
\end{equation}

In one spatial dimension ($n=1$), the solution operator $W$ is defined by
\begin{equation}
\label{sol-one}
      W(\phi|x,t):=\frac{1}{2}\int_{x-t}^{x+t}\phi(y)dy
\end{equation}
for $\phi\in C(\R)$.

Next we give the definition of solution and its lifespan to the Cauchy problem (\ref{IVP-2}). Here $[a]$ denotes the integral part of $a\in \R$
\begin{definition}[Solution, Lifespan]
\label{def1}
Let $T>0$,$n\in \N$ with $m=[n/2]$, $(f,g)\in C^{m+1}(\R^n)\times C^{m}(\R^n)$ and $\varepsilon>0$. We say that the function $u:\R^n\times [0,T)\rightarrow \R$ is a solution to the Cauchy problem (\ref{IVP-2}) if $u$ belongs to the class $C(\R^n\times [0,T))$ and $u$ satisfies the integral equation (\ref{IE_u_depend}). We call the maximal existence time to be lifespan, which is denoted by
\[
    T_{\varepsilon}:=\sup\left\{T\in (0,\infty]:\text{there exists a unique solution $u$ to (\ref{IVP-2}) on $\R^n\times [0,T)$}\right\}.
\]
\end{definition}
Next we introduce the scaling critical decay exponent $\nu_c$ for \eqref{IVP-2}, which is given by
\begin{equation}
\label{critical-decay}
\nu_c=\nu_c(n,\gamma):=\frac{n-\gamma}{2}.
\end{equation}
Here this is same as (\ref{scaling}) with $\mu=2$.

Now we state our main result in this paper. The following theorem means a small data blow-up result and an upper estimate of lifespan to the Cauchy problem (\ref{IVP}) with a specific coefficient $\mu=2$ and with data $(f,g)$ satisfying the spatial decay condition (\ref{slow-decay-2}) as $|x|\rightarrow\infty$ in the scaling supercritical case ($\nu<\nu_c$):
\begin{thm}[Upper estimate of lifespan for slowly decaying small data]
\label{T.1.1}
Let $n\in \N$, $\gamma\in (0,n)$, $\nu\in (0,\nu_c)$, $(f,g)\in C^1(\R^n)\times C^0(\R^n)$, $R>0$ and $A>0$. We assume that $f\equiv 0$, and $g$ is radially symmetric function if $n\ge 2$ and the estimate
\begin{equation}
\label{asm-blowup2}
g(x)\ge \frac{A}{(1+|x|)^{1+\nu}},
\end{equation}
holds for any $x\in \R^n$ with $|x|\ge R$.
Then there exist positive constants $\e_0=\e_0(A,g,\gamma,n,R)>0$ and $B=B(A,g,\gamma,n)>0$ independent of $\varepsilon$ such that the lifespan $T_\e$ defined in Definition \ref{def1} satisfies the following estimate
\begin{equation}\label{upper_lifespan}
	T_\e\le B\e^{-\frac{2}{n-\gamma -2\nu}}
\end{equation}
for $0<\e\le\e_0$.
\end{thm}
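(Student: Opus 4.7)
The strategy is to iterate the positive integral equation \eqref{IE_u_depend}, following the scheme of Takamura \cite{T95} for slowly decaying data and combining it with a new pointwise lower bound on the cubic convolution in the spirit of \cite{TUW10}. Since $f\equiv 0$ and $g\ge 0$, both $W$ and $L$ preserve positivity, so any solution of \eqref{IVP-2} must satisfy
\begin{equation*}
u(x,t) \ge \e u^0(x,t) + L\bigl((V_\ga * u^2)u\bigr)(x,t), \quad (x,t)\in\R^n\times[0,T_\e),
\end{equation*}
with $u^0(x,t)=W(g|x,t)$. I would restrict attention to the forward cone $\Sigma_R=\{(x,t):t-|x|\ge R+1\}$, on which \eqref{asm-blowup2} can be used fully. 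A standard John/Asakura-type computation, integrating $g(y)\gtrsim(1+|y|)^{-(1+\nu)}$ against the spherical kernel of $W$ and using radial symmetry, yields the base bound
\begin{equation*}
u^0(x,t)\ge\frac{C_0}{(1+t+|x|)^\nu}\quad\text{on }\Sigma_R,
\end{equation*}
for some $C_0=C_0(A,g,n,R)>0$: the volume of the sphere (or ball, for even $n$) of radius $t$ around $x$ cancels exactly the $t^{-(n-1)}$ (resp.\ $t^{-(n-2)}$) prefactor, leaving the slow-decay exponent $\nu$ intact.

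\textbf{Iteration and convolution estimate.} Next I would build monotone iterates $u_{k+1}=\e u^0 + L((V_\ga * u_k^2)u_k)$ with $u_0\equiv 0$, which increase pointwise to $u$, and seek successive lower bounds of the ansatz form $u_k(x,t)\ge D_k(t-|x|)^{a_k}/(t+|x|)^{b_k}$ on $\Sigma_R$, initialized from the base bound above. One cycle requires two ingredients. First, the convolution lower bound
\begin{equation*}
(V_\ga * u_k^2)(y,s) \ge c\, D_k^2\,\frac{(s-|y|)^{2a_k + n - \ga}}{(s+|y|)^{2b_k}}\quad\text{on }\Sigma_R,
\end{equation*}
obtained by restricting the defining integral to $\{z:|z-y|\le (s-|y|)/2\}$, where $u_k^2$ remains comparable to its value at $y$ and the kernel $|y-z|^{-\ga}$ integrates to $\sim(s-|y|)^{n-\ga}/(n-\ga)$ thanks to $\ga<n$. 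Second, the John-type estimate of $L$ applied to such a monomial profile, together with the extra $(1+s)^{-2}$ factor from the equation, gives a bound $u_{k+1}\ge D_{k+1}(t-|x|)^{a_{k+1}}/(t+|x|)^{b_{k+1}}$ with recursions of the shape $a_{k+1}=3a_k+(n-\ga)-\kappa_1$, $b_{k+1}=3b_k+\kappa_2$, $D_{k+1}\ge c_* D_k^3/\Pi(a_k,b_k)$ for constants $\kappa_1,\kappa_2,c_*>0$ and a polynomial $\Pi$ depending only on $(n,\ga)$.

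\textbf{Extraction of the lifespan bound.} Taking logarithms and summing the geometric series driven by the factor $3$, one finds $\log D_k\sim 3^k(K_1\log\e + K_2)$ and $a_k-b_k\sim 3^k K_3$ for explicit constants $K_i=K_i(n,\ga,\nu)$ with $K_1>0$. Evaluating the bound at $(x,t)=(0,T)$ with $T$ large enough that $(0,T)\in\Sigma_R$, the inequality $u(0,T)\ge D_k T^{a_k-b_k}$ tends to $+\infty$ as $k\to\infty$ whenever $K_1\log\e + K_3\log T>0$, contradicting the continuity of $u$ on $\R^n\times[0,T)$. A direct bookkeeping using $\nu_c=(n-\ga)/2$ shows that $K_1/K_3=2/(n-\ga-2\nu)$, so the lifespan must satisfy $\e^2 T_\e^{n-\ga-2\nu}\le B_0$ for some $B_0=B_0(A,g,n,\ga)>0$. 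Since $\nu<\nu_c$ makes the exponent $n-\ga-2\nu$ positive, rearranging yields exactly \eqref{upper_lifespan} for all $0<\e\le\e_0$.

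\textbf{Main obstacle.} The hardest step, and the authors' flagged new contribution, is the convolution lower bound in dimensions $n\ge 4$. In three dimensions \cite{T03} can use explicit radial formulas, but for general $n$ the lower bound on $u_k$ is concentrated in a thin layer near the forward light cone $|z|\sim s$ and the kernel $|y-z|^{-\ga}$ varies substantially across it. The key is to pick a favorable subregion (here a ball of radius $\sim s-|y|$ around $y$) where both factors can be controlled simultaneously, to use $\ga<n$ to ensure integrability, and to keep exact track of the factors $(s\pm|y|)$ so that the resulting recursion closes at the scaling-critical rate $T\sim\e^{-2/(n-\ga-2\nu)}$; any loss in this estimate would degrade the exponent in \eqref{upper_lifespan}.
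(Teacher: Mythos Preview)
Your overall iteration scheme and the endgame algebra are sound, but the proposal has a genuine gap that undermines it precisely in the dimensions $n\ge 4$ that are the point of the paper.

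\textbf{The positivity claim is false for $n\ge 4$.} You assert that ``both $W$ and $L$ preserve positivity,'' and your whole argument (the base bound $u^0\ge C_0(1+t+|x|)^{-\nu}$, the monotone iterates $u_k$, the convolution lower bound via restriction to a ball) is run on the interior region $\Sigma_R=\{t-|x|\ge R+1\}$. For $n=1,2,3$ the forward fundamental solution of the wave equation is a nonnegative measure and your claim is correct. For $n\ge 4$ it is not: the fundamental solution involves derivatives of surface measure on the light cone (odd $n$) or of the characteristic function of the solid cone (even $n$), and $W(g\,|\,\cdot)$ can be negative even for smooth nonnegative $g$. In the radial formulas the issue is visible as the Legendre and Tschebyscheff factors $P_{m-1}\bigl((\lambda^2+r^2-t^2)/(2r\lambda)\bigr)$ and $T_{m-1}(\cdot)$, which change sign when their argument drops below $1$; inside the light cone ($r<t$) this argument runs through negative values and no positivity is available. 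Consequently neither your base bound on $\Sigma_R$ nor the step ``$u_{k+1}\ge\varepsilon u^0+L(\cdots)\ge\varepsilon u^0$'' is justified for $n\ge 4$, and the iteration cannot start.

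\textbf{How the paper handles this.} The paper works instead in the \emph{exterior} region $\Sigma=\{(r,t):r-t\ge\max(R,\delta t)\}$, with $\delta=\delta(n)$ chosen so that on $\Sigma$ the argument of $P_{m-1}$ (resp.\ $T_{m-1}$) stays in a right neighborhood of $1$ where the polynomial is $\ge 1/2$. This yields a genuine pointwise lower bound (Proposition~\ref{lem:positive}) replacing your positivity claim, and the iteration ansatz carries powers of $t$ and of $r-t-\max(R,\delta t)$ rather than of $t-|x|$. The convolution lower bound is then obtained not by restricting to a ball around $y$ (which would require a lower bound on $u$ in a full spatial neighborhood, unavailable on the exterior side), but by rewriting $V_\gamma*u^2$ via John's spherical-means identity as a one-dimensional radial integral $G_\gamma(u^2)$ and estimating the kernel $h(\eta,\rho,\lambda)$ from below (Proposition~\ref{prop:G-est}); this is where the real $n\ge 4$ work sits. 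Your simpler ball-restriction idea for the convolution is attractive and would work \emph{inside} the light cone for $n\le 3$, but it does not transfer to the exterior region, and the exterior region is forced on you by the failure of positivity in high dimensions.
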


\begin{rem}
We compare our blow-up result (Theorem \ref{T.1.1}) of the present paper ($\mu=2$) to the previous results (Theorem 2.1 and Theorem 3.4 in \cite {T03}) ($\mu=0$ and $n=3$). Theorem 2.1 in \cite{T03} implies a small data global existence result for (\ref{IVP}) with $\mu=0$ and $\gamma\in (2,3)$ for the data $(f,g)$ satisfying (\ref{slow-decay-1}) with $\nu>\nu_c(3,\gamma,0)=\frac{5-\gamma}{2}$ in three spatial dimension ($n=3$). In Theorem 3.4 in \cite{T03}, Tsutaya proved the similar result to ours for (\ref{IVP}) with $\mu=0$ and $\gamma\in (0,3)$ for the data $(f,g)$ satisfying (\ref{slow-decay-2}) with $\nu<\nu_c(3,\gamma,0)=\frac{5-\gamma}{2}$ in three spatial dimension ($n=3$). Thus from Theorems 2.1 and 3.4 in \cite{T03}, we see that the critical decay exponent for \eqref{IVP-1} with $\mu=0$ in three spatial dimension $n=3$ is $\nu_c(3,\ga,0)=\frac{5-\gamma}{2}$. On the other hand, from our result (Theorem \ref{T.1.1}), we can see a shift of the spatial decay condition as $|x|\rightarrow\infty$ on the data $(f,g)$ from $\nu<\frac{5-\gamma}{2}$ to $\nu<\frac{3-\gamma}{2}$. Moreover, we prove blow-up not only in three spatial dimension but also in the other spatial dimensions.
\end{rem}

\begin{rem}
In our main result (Theorem \ref{T.1.1}), a radially symmetric assumption on the data $g$ is assumed in two dimensional case and higher dimensional case ($n\ge 2$). In fact, in two or three dimensional case ($n=2,3$), we do not have to assume the radially symmetric assumption on $g$, since the fundamental solution to the free wave equation is positive on $\R^n\times \R$ in two or three dimensional case.
\end{rem}

We explain the strategy of the proof of Theorem \ref{T.1.1}. The proof is based on an iteration argument originally developed by John \cite{J79} (see also \cite{T95, TUW10}). We divide the proof into two cases, i.e. $n\ge 2$ (Section \ref{highdime}) and $n=1$ (Section \ref{oned}). In the high dimensional case ($n\ge 2$), we use Proposition \ref{lem:positive} and estimate the solution to (\ref{IE_u_depend}) from below under the radially symmetric assumption on data. Especially, the essential part of the proof is the estimate of the convolution term in high spatial dimension ($n\ge 4$) (see Proposition \ref{prop:G-est}). In one dimensional case, we use the integral equation instead of Proposition \ref{lem:positive} and estimate the solution to (\ref{IE_u_depend}) from below.

The rest of this paper is organized as follows.
In Section \ref{highdime} and Section \ref{oned}, we give a proof of Theorem \ref{T.1.1} in high spatial dimension $n\ge 2$ and one spatial dimension $n=1$ respectively. In appendix, we give a proof of Proposition \ref{lem:positive}.

\section{Proof of Theorem \ref{T.1.1} in high spatial dimension $n\ge 2$}
\label{highdime}
\ \ In this section, we give a proof of Theorem \ref{T.1.1} in high spatial dimension $n\ge 2$.
\subsection{Useful lemmas}
\ \ First we prepare several useful lemmas in order to prove the theorem.
We state a fundamental identity for spherical means proved by John \cite{J55}.
    \begin{lem}
    \label{lm:Planewave}
    Let $n\in \N$ with $n\ge2$, $b:[0,\infty)\rightarrow \R$ be a function in $C([0,\infty))$ and $\rho>0$. Then the identity
    \begin{equation}
    \label{Planewave}
    \begin{array}{ll}
    \d \int_{|\omega|=1}b(|x+\rho \omega|)dS_\omega
    \d = 2^{3-n}\omega_{n-1}(r\rho)^{2-n}\int_{|\rho-r|}^{\rho+r}\eta b(\eta)
    h(\eta, \rho ,r)d\eta,
    \end{array}
    \end{equation}
    holds for any $\rho>0$ and $x\in \R^n$ with $r=|x|$, where $\omega_{n}$ is the area of the unit sphere in $\R^{n}$ given by $\omega_{n}:=\frac{2\pi^{\frac{n}{2}}}{\Gamma\left(n/2\right)}$ (see (\ref{units})), and $h$ is defined by
    \begin{equation}
    \label{h}
    h(\eta, \rho ,r)
    :=\{\eta^2-(\rho-r)^2\}^{\frac{n-3}{2}}\{(\rho+r)^2-\eta^2\}^{\frac{n-3}{2}}.
    \end{equation}
    \end{lem}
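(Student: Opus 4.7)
The plan is to exploit rotational symmetry to reduce the sphere integral to a one-dimensional integral in a polar angle, and then change variables to $\eta = |x + \rho\omega|$. Since both sides of \eqref{Planewave} depend on $x$ only through $r = |x|$, I would first take $x = r e_n$ without loss of generality. Parametrizing $\omega \in S^{n-1}$ by $\omega = (\sin\theta\,\omega', \cos\theta)$ with $\theta \in [0,\pi]$ and $\omega' \in S^{n-2}$, so that $dS_\omega = \sin^{n-2}\theta\, d\theta\, dS_{\omega'}$, a direct computation gives
\[
|x + \rho\omega|^2 = r^2 + \rho^2 + 2r\rho\cos\theta.
\]
Since the integrand is then independent of $\omega'$, the $\omega'$-integration contributes the factor $\omega_{n-1}$, and the problem reduces to a one-dimensional integral in $\theta$.

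Next I would perform the substitution $\eta := \sqrt{r^2 + \rho^2 + 2r\rho\cos\theta}$, which maps $[0,\pi]$ onto $[|\rho - r|, \rho + r]$ with orientation reversed and satisfies $\sin\theta\, d\theta = -\eta\, d\eta/(r\rho)$. Writing $\sin^{n-2}\theta\, d\theta = \sin^{n-3}\theta \cdot \sin\theta\, d\theta$ reduces the job to expressing the remaining $\sin^{n-3}\theta = (1 - \cos^2\theta)^{(n-3)/2}$ in terms of $\eta$. The key algebraic identity is the factorization
\[
(2r\rho)^2 - (\eta^2 - r^2 - \rho^2)^2 = \{(\rho + r)^2 - \eta^2\}\{\eta^2 - (\rho - r)^2\},
\]
which produces exactly the kernel $h(\eta,\rho,r)$ defined in \eqref{h}. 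Collecting the resulting powers of $2r\rho$ gives the prefactor $2^{3-n}(r\rho)^{2-n}$ on the right-hand side of \eqref{Planewave}.

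The argument is essentially an exercise in bookkeeping, so I do not foresee a serious obstacle beyond accurately tracking the exponents of $2$, $r$, $\rho$ and the sign introduced by the orientation-reversing substitution. The one subtlety worth checking is the case $n = 2$, where the exponent $(n-3)/2 = -1/2$ makes $h$ singular at the endpoints $\eta = |\rho - r|$ and $\eta = \rho + r$; these singularities are of integrable square-root type and correspond to the factor $1/\sin\theta$ appearing when the substitution is inverted near $\theta = 0, \pi$, so the identity continues to make sense and to hold.
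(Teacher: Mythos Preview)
Your argument is correct and is exactly the classical derivation of this spherical-means identity. The paper does not actually supply its own proof of Lemma~\ref{lm:Planewave}; it simply cites Chapter~I of John~\cite{J55} (and Lemma~2.1 in \cite{K04}), where precisely this rotational-reduction/change-of-variables computation is carried out. So your proposal coincides with the proof the authors defer to.
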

For the proof of this lemma, see Chapter I in \cite{J55} (see also Lemma 2.1 in \cite{K04}).

Next we state a formula for the convolution term for radially symmetric functions.
 \begin{lem}
  \label{lem:conv-est}
Let $n\in \N$ with $n\ge2$, $\gamma\in \R$, $U=U(|x|)$ be a radially symmetric function on $\R^n$. Then the identity
  \begin{equation}
  \label{conv-est}
  (V_{\gamma}*U)(x)=G_{\gamma}(U)(r),
  \end{equation}
holds for any $x\in \R^n$ with $r=|x|$, provided that both hand sides are finite on $\R^n$, where $G_{\gamma}$ is a linear operator on $[0,\infty)$ given by
\begin{equation}
\label{G}
G_{\gamma}(U)(r):=\frac{2^{3-n}\omega_{n-1}}{r^{n-2}}\int_{0}^{\infty} \rho U(\rho)
    \left\{\int_{|\rho-r|}^{\rho+r}\eta^{1-\gamma}h(\eta,\rho,r)d\eta \right\}d\rho.
\end{equation}
Here $h=h(\eta,\rho,r)$ is defined in Lemma \ref{lm:Planewave} (see (\ref{h})).
\end{lem}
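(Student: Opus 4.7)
The plan is to write the convolution in polar coordinates and then reduce the inner spherical average to a one-dimensional integral via Lemma \ref{lm:Planewave}.

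First I would express
\[
(V_\gamma*U)(x)=\int_{\R^n}|x-y|^{-\gamma}U(|y|)\,dy
=\int_0^\infty \rho^{n-1}U(\rho)\!\left(\int_{|\omega|=1}|x-\rho\omega|^{-\gamma}dS_\omega\right)d\rho,
\]
using $y=\rho\omega$ with $\rho=|y|$ and $\omega\in S^{n-1}$. Since the unit sphere is symmetric under $\omega\mapsto -\omega$, the inner spherical integral is unchanged when $-\rho\omega$ is replaced by $+\rho\omega$, i.e.\ the integral equals $\int_{|\omega|=1}|x+\rho\omega|^{-\gamma}dS_\omega$. This is exactly the quantity handled by Lemma \ref{lm:Planewave} with the choice $b(\eta)=\eta^{-\gamma}$ (which is continuous on $(0,\infty)$; the hypothesis that both sides of \eqref{conv-est} are finite ensures no integrability issue at the endpoints).

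Applying Lemma \ref{lm:Planewave} with $r=|x|$ yields, for each $\rho>0$,
\[
\int_{|\omega|=1}|x+\rho\omega|^{-\gamma}dS_\omega
=2^{3-n}\omega_{n-1}(r\rho)^{2-n}\int_{|\rho-r|}^{\rho+r}\eta^{1-\gamma}\,h(\eta,\rho,r)\,d\eta,
\]
with $h$ as in \eqref{h}. Substituting into the polar representation and collecting the $\rho$-powers gives $\rho^{n-1}\cdot(r\rho)^{2-n}=r^{2-n}\rho$, so
\[
(V_\gamma*U)(x)=\frac{2^{3-n}\omega_{n-1}}{r^{n-2}}
\int_0^\infty \rho\,U(\rho)\left(\int_{|\rho-r|}^{\rho+r}\eta^{1-\gamma}\,h(\eta,\rho,r)\,d\eta\right)d\rho,
\]
which is precisely $G_\gamma(U)(r)$ defined in \eqref{G}. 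This proves \eqref{conv-est} and shows that $(V_\gamma*U)(x)$ depends on $x$ only through $r=|x|$, as expected from the radial symmetry of both $V_\gamma$ and $U$.

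The argument is essentially a one-line computation given Lemma \ref{lm:Planewave}; there is no genuine obstacle. The only point requiring mild care is the $\omega\mapsto -\omega$ symmetrization (needed because Lemma \ref{lm:Planewave} is stated for $|x+\rho\omega|$ while the convolution naturally produces $|x-\rho\omega|$), and the implicit justification, under the stated finiteness hypothesis, of Fubini's theorem in exchanging the $\rho$, $\omega$ (equivalently $\rho$, $\eta$) orders of integration.
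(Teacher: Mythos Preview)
Your proof is correct and follows essentially the same approach as the paper's: write the convolution in polar coordinates and apply Lemma~\ref{lm:Planewave} with $b(\eta)=\eta^{-\gamma}$. You add the minor observation about the $\omega\mapsto-\omega$ symmetry and the Fubini justification, which the paper omits, but the argument is the same.
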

\begin{proof}[Proof of Lemma \ref{lem:conv-est}]
    By the definition of the convolution and by using the polar coordinate with $y=\rho\omega$, where $\rho>0$ and $\omega\in S_{n-1}$,  we have
    \[
     (V_{\gamma}*U)(x)=\int_{\R^n}\frac{U(|y|)}{|x-y|^{\gamma}}dy
    =\int_{0}^{\infty}U(\rho)\rho^{n-1}\left(
    \int_{|\omega|=1}\frac{1}{|x-\rho \omega|^{\gamma}}dS_{\omega}\right) d\rho.
    \]
By applying the identity (\ref{Planewave}) with $b(r)=|r|^{-\gamma}$, we get the identity (\ref{conv-est}), which completes the proof of the lemma.
\end{proof}

For $T>0$, $R>0$ and $\delta>0$, we introduce the region $\Sigma$ given by
\[
\Sigma=\Sigma(T,R,\delta):=\left\{(r,t)\in (0,\infty)\times (0,T): r-t \ge \max(R, \delta t)>0\right\},
\]
where $R$ and $\delta$ are given in Theorem \ref{T.1.1} and Lemma \ref{lem:positive} respectively.

The next proposition means lower estimates of radial solutions to the wave equation (\ref{IE_u_depend}) on the region $\Sigma$, which are useful to prove Theorem \ref{T.1.1}.
\begin{prop}
\label{lem:positive}
Let $n\in \N$ with $n\ge 2$, $m:=[n/2]$, $\varepsilon>0$, $(f,g)\in C^1(\R^n)\times C^0(\R^n)$ satisfy the assumptions of Theorem \ref{T.1.1}, $T>0$ and $u=u(r,t)\in C((0,\infty)\times [0,T))$ be a radial solution to (\ref{IE_u_depend}). Then there exists a positive constant $\delta= \delta(n)$ depending only on $n$ such that the estimate
\begin{equation}
\label{positive-high}
u(r,t)>0
\end{equation}
holds for any $(r,t)\in \Sigma(T,R,\delta)$.
Moreover, the estimate
\begin{equation}
\begin{array}{llll}
\label{positive}
\d u(r,t)\ge\frac{\varepsilon}{8r^m}\int_{r-t}^{r+t}\lambda^mg(\lambda)d\lambda\\
\d \qquad \qquad +\frac{1}{8r^m}\int_{0}^{t}\frac{1}{(1+s)^2}\left\{\int_{r-t+s}^{r+t-s}\!\!\!\!
\lambda^m G_{\gamma}(u^2)(\lambda,s)u(\lambda,s)
d\lambda\right\}ds,
\end{array}
\end{equation}
holds for any $(r,t)\in \Sigma(T,R,\delta)$.
\end{prop}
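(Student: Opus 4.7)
The plan is to reduce the proposition to a single pointwise lower bound of the form
\begin{equation*}
W(\phi|x,t) \geq \frac{1}{8r^m}\int_{r-t}^{r+t}\lambda^m \phi(\lambda)\,d\lambda
\end{equation*}
on $\Sigma(T,R,\delta)$, valid for every nonnegative radial $\phi\in C([0,\infty))$, once $\delta = \delta(n)$ is chosen small enough. Granting this kernel inequality, (\ref{positive}) follows by applying it termwise to the integral equation (\ref{IE_u_depend}): since $f\equiv 0$, the linear contribution is $\varepsilon u^0(x,t) = \varepsilon W(g|x,t)$ and produces the first summand; for the Duhamel term $L((V_\gamma*u^2)u)$ the inner propagator $W$ is applied at time $t-s$ to the radial function $(V_\gamma*u^2)u/(1+s)^2$, and Lemma \ref{lem:conv-est} identifies the convolution with $G_\gamma(u^2)$, producing the space-time integral in (\ref{positive}). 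The strict positivity (\ref{positive-high}) then follows from the linear part being strictly positive on $\Sigma$ (thanks to the pointwise bound on $g$) and a standard bootstrap on the Picard iterates that defines $u$: the iteration preserves nonnegativity because $G_\gamma$ has a positive kernel and acts on the nonnegative $u^2$.

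To prove the kernel inequality in the odd case $n = 2m+1$, I would apply Lemma \ref{lm:Planewave} with $b=\phi$ to rewrite
\begin{equation*}
t^{2m-1}M(\phi|x,t) = \frac{c_n}{r^{n-2}} \int_{r-t}^{r+t} \eta\phi(\eta)\,h(\eta,t,r)\,d\eta
\end{equation*}
on $\Sigma$, where $c_n$ depends only on $n$. Since $(n-3)/2$ is a nonnegative integer in odd dimensions, $h(\eta,t,r)$ vanishes at both endpoints $\eta=r-t$ and $\eta=r+t$, so the $m-1$ applications of $(1/t)\partial_t$ commute with the integral sign and act only on $h$. A short induction on $k = 1,\dots,m-1$ shows that $[(1/t)\partial_t]^k h$ remains nonnegative on $\Sigma$ and is bounded below by a positive constant times $\eta^{n-3-2k}$ together with suitable powers of $r\pm t$, provided $r-t \geq \delta t$ with $\delta=\delta(n)$ sufficiently small. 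Recombining all factors and using that on $\Sigma$ the quantities $\eta$, $r-t$, $r+t$, $r$ and $t$ are pairwise comparable up to constants depending only on $n$ and $\delta$, one obtains a kernel bounded below by a constant multiple of $\eta^m/r^m$; shrinking $\delta(n)$ further ensures the constant is at least $1/8$.

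For even dimensions $n=2m$ I would use Hadamard's method of descent, regarding the radial function on $\R^n$ as a radial function on $\R^{n+1}$ that is independent of the extra coordinate; this reduces the problem to the odd-dimensional case just treated, with the same value of $m = [n/2]$. The main obstacle throughout is to verify that each application of $(1/t)\partial_t$ preserves the sign of the resulting kernel and that, after $m-1$ iterations, the kernel is bounded below by the clean power $\eta^m$ rather than some strictly smaller one. The geometric restriction $r-t \geq \max(R,\delta t) > 0$ defining $\Sigma$ is precisely what is needed to make this work: it keeps $\eta$ away from $0$ on the integration range, forces all auxiliary quantities $\eta$, $r\pm t$, $r$, $t$ to be pairwise comparable, and consequently allows the redistribution of powers that yields the stated $\lambda^m/r^m$ structure with an absolute constant.
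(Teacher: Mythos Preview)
Your even-dimensional reduction has a genuine gap. A radial function $\phi(|x|)$ on $\R^n$, extended to $\R^{n+1}$ by $\tilde\phi(x,x_{n+1})=\phi(|x|)$, is cylindrically symmetric but \emph{not} radial in $\R^{n+1}$; the odd-dimensional radial kernel inequality you derived therefore does not apply to $\tilde\phi$. Hadamard descent does relate $W_n$ to $W_{n+1}$, but precisely at the cost of radial symmetry, which is why a genuinely separate even-dimensional formula is required rather than a reduction to the odd case ``with the same value of $m$.''

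The paper's route avoids both this issue and your induction on $[(1/t)\partial_t]^k h$. It quotes from \cite{T95} the explicit radial representations: for $n=2m+1$ the kernel in $I(r,t,w)$ is the Legendre polynomial $P_{m-1}\bigl((\lambda^2+r^2-t^2)/(2r\lambda)\bigr)$, and for $n=2m$ the kernel in $J(r,t,w)$ is the Tschebyscheff polynomial $T_{m-1}$ of the same argument, together with an extra $\rho$-integration over a ball. Since $P_{m-1}(1)=T_{m-1}(1)=1$, continuity gives $\delta_m>0$ with both polynomials $\ge 1/2$ on $[1/(1+\delta_m),1]$; taking $\delta=2/\delta_m$ forces the argument $(\lambda^2+r^2-(t-s)^2)/(2r\lambda)$ to stay in this range throughout the backward cone of any $(r,t)\in\Sigma$, and (\ref{positive}) follows immediately from the representations (\ref{int-odd})--(\ref{int-even}). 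For (\ref{positive-high}) the paper uses Keller's contradiction argument---set $t_1$ to be the infimum of times at which $u$ vanishes in the cone $\Gamma(r_0,t_0)\subset\Sigma$ and show the integral representation forces $u(r_1,t_1)>0$---rather than Picard iterates; your Picard argument would need an additional uniqueness statement to identify the given continuous solution with the iterate limit.
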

This proposition can be proved in the similar manner to the proofs of Lemma 2.6 in \cite{T95} and Lemma 4.1 in \cite{TUW10}. The original idea comes from a comparison argument by Keller \cite{K57}. For convenience of the readers, we give a proof of this proposition in Appendix \ref{appendix1}.

\subsection{Iteration argument in high spatial dimension $n\ge 2$}
\ \ The proof of Theorem \ref{T.1.1} is based on an iteration argument (see \cite{J79, T95, TUW10}). To proceed the argument, we estimate the convolution term $G_{\gamma}(u(r,t))$ (see (\ref{conv-est}) for the definition of $G_{\gamma}$) in high spatial dimension $n\ge 2$ (Proposition \ref{G-est}) in this subsection, which is the most essential part of the present paper. Here $u=u(r,t)$ is a radial solution to (\ref{IE_u_depend}) on $\R^n\times [0,T)$ with the data $(f,g)$ satisfying the all assumptions of Theorem \ref{T.1.1}.

The following lemma gives the first step of the iteration argument.
\begin{lem}[First step of the iteration]
\label{lem_firs}
Under the same assumptions as Proposition \ref{lem:positive}, the estimate
  \[
  u(r,t)\ge\frac{A\e t}{8(1+r+t)^{1+\nu}}
  \]
holds for any $(r,t)\in \Sigma$.
\end{lem}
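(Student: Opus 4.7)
The plan is to feed the lower bound of Proposition \ref{lem:positive} with the initial-data hypothesis \eqref{asm-blowup2} and a monotonicity estimate on the kernel $\lambda^{m}/(1+\lambda)^{1+\nu}$, discarding the nonlinear contribution by positivity.

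First I would invoke \eqref{positive} on $\Sigma$. The iterated integral there involves $u(\lambda,s)$ at points with $\lambda\in[r-t+s,\,r+t-s]$ and $s\in[0,t]$; one has $\lambda-s\ge r-t\ge\max(R,\delta t)\ge\max(R,\delta s)$, so every such $(\lambda,s)$ lies in $\Sigma$ and $u(\lambda,s)>0$ by \eqref{positive-high}. Since $G_{\gamma}(u^{2})(\lambda,s)\ge 0$ as a convolution of the non-negative function $u^{2}$ with the positive kernel $V_{\gamma}$, the entire double integral in \eqref{positive} is non-negative and may be dropped, giving
\[
u(r,t)\ge\frac{\e}{8r^{m}}\int_{r-t}^{r+t}\lambda^{m}g(\lambda)\,d\lambda.
\]
Next, since $(r,t)\in\Sigma$ forces $r-t\ge R$, the assumed pointwise lower bound $g(\lambda)\ge A/(1+\lambda)^{1+\nu}$ is valid throughout the range of integration. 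Using the monotone estimate $(1+\lambda)^{-(1+\nu)}\ge(1+r+t)^{-(1+\nu)}$ for $\lambda\in[r-t,r+t]$, I would pull the factor $A(1+r+t)^{-(1+\nu)}$ out of the integral and be left with
\[
\int_{r-t}^{r+t}\lambda^{m}\,d\lambda=\frac{(r+t)^{m+1}-(r-t)^{m+1}}{m+1}.
\]

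The proof then closes on the elementary binomial identity
\[
(r+t)^{m+1}-(r-t)^{m+1}=2\sum_{\substack{0\le k\le m+1\\ k\text{ odd}}}\binom{m+1}{k}r^{m+1-k}t^{k}\ge 2(m+1)\,r^{m}t,
\]
obtained by keeping only the $k=1$ term (all odd-$k$ terms are non-negative for $r,t>0$; note $m=[n/2]\ge 1$ since $n\ge 2$). Division by $r^{m}(m+1)$ produces an extra factor of $2$, which easily absorbs the constant $1/8$, yielding
\[
u(r,t)\ge\frac{A\e}{4}\cdot\frac{t}{(1+r+t)^{1+\nu}}\ge\frac{A\e t}{8(1+r+t)^{1+\nu}},
\]
as required. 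There is no real obstacle in this first step; the only mild subtlety is the verification that the spacetime region on which one throws away the nonlinear term sits inside $\Sigma$, so that \eqref{positive-high} applies and justifies the discard.
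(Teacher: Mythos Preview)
Your proof is correct and follows essentially the same approach as the paper: drop the nonlinear term in \eqref{positive} by checking that the integration domain lies in $\Sigma$ so that \eqref{positive-high} applies, then insert the lower bound \eqref{asm-blowup2} on $g$. The only cosmetic difference is that the paper restricts the $\lambda$-integral to $[r,r+t]$ and uses $\lambda^{m}\ge r^{m}$ there, whereas you keep the full interval $[r-t,r+t]$ and invoke the binomial expansion; both are elementary and your route even yields the slightly sharper constant $A\e/4$.
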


\begin{proof}[Proof of Lemma \ref{lem_firs}]
Let $(r,t)\in \Sigma$. For $s\in (0,t)$ and $\lambda\in (r-t+s,\infty)$, the estimate
\[
   \lambda-s\ge r-t\ge \max(R,\delta t)
\]
holds, which implies that the estimate $u(\lambda,s)>0$ holds. Thus by the estimate (\ref{positive}) and the assumption (\ref{asm-blowup2}) on the data $g$, the inequalities
\[
u(r,t)\ge \frac{1}{8r^m}\int_{r}^{r+t}\lambda^m\frac{A\e d\lambda}{(1+\lambda)^{1+\nu}}
  \ge\frac{A\e t}{8(1+r+t)^{1+\nu}}
\]
hold, which completes the proof of the lemma.
\end{proof}


In the following proposition (Proposition \ref{prop:G-est}), we assume that there exist positive constants $a,b,c,d$ such that the estimate
  \begin{equation}
  \label{basic-est}
  u(r,t)\ge \frac{ct^a \left\{r-t-\max(R,\delta t)\right\}^d}{(1+r+t)^b}
  \end{equation}
holds for any $(r,t)\in \Sigma$. We note that from Lemma \ref{lem_firs}, we see that this estimate holds with $a=1$, $b=1+\nu$, $c=A\e/8$ and $d=0$.

Under the assumption (\ref{basic-est}), we prove the following estimate for the convolution term $G_{\gamma}(u^2)$ in the right-hand side of  (\ref{positive}):
\begin{prop}
\label{prop:G-est}
Besides the assumptions of Proposition \ref{lem:positive}, we assume that the estimate (\ref{basic-est}) holds for any $(r,t)\in \Sigma$ and some $a,b,c,d$ and $\gamma\ge 0$. Let $(r,t)\in \Sigma$. Then the estimate
\begin{equation}
\label{G-est}
G_{\gamma}(u^2)(\lambda,s)\ge \frac{Cc^2s^{2a+\frac{3n-3}{2}}\left\{\lambda-s-\max(R,\delta s)\right\}^{2d+1}
}{(2d+1)2^{\gamma}\lambda^{\frac{n-1}{2}+\gamma}(1+s+\lambda)^{2b}}
\end{equation}
holds for any $\lambda\in [r-t+s,r+t-s]$ and $s\in [0,t]$, where $C=C(n)>0$ is a positive constant depending only on $n$.
\end{prop}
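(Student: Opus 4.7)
The plan is to restrict the double integral representation
\EQQS{
G_{\gamma}(u^2)(\lambda,s)=\frac{2^{3-n}\omega_{n-1}}{\lambda^{n-2}}\int_{0}^{\infty}\rho\, u^2(\rho,s)\,I(\rho,\lambda)\,d\rho,\quad I(\rho,\lambda):=\int_{|\rho-\lambda|}^{\rho+\lambda}\eta^{1-\gamma}h(\eta,\rho,\lambda)\,d\eta,
}
supplied by Lemma~\ref{lem:conv-est} to explicit sub-intervals on which the hypothesis \eqref{basic-est} is applicable and every factor of $h$ can be controlled. Set $R_0:=s+\max(R,\delta s)$; because $(r,t)\in\Sigma$ and $\lambda\in[r-t+s,r+t-s]$, one has $\lambda\ge R_0\ge s$. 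I would restrict $\rho$ to $[R_0,\lambda]$, which ensures $(\rho,s)\in\Sigma$ so that \eqref{basic-est} is available for $u(\rho,s)$.

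For each such $\rho$ I would further restrict $\eta\in[\rho+\lambda-\tfrac{s}{2},\rho+\lambda-\tfrac{s}{4}]$; this lies inside $[\lambda-\rho,\rho+\lambda]$ because $\rho\ge R_0\ge s$ gives $2\rho\ge s/2$. The factorizations $(\rho+\lambda)^2-\eta^2=(\rho+\lambda-\eta)(\rho+\lambda+\eta)$ and $\eta^2-(\rho-\lambda)^2=(\eta-(\lambda-\rho))(\eta+(\lambda-\rho))$, together with the estimates $\rho+\lambda-\eta\in[s/4,s/2]$, $\rho+\lambda+\eta\ge 2\lambda$, $\eta-(\lambda-\rho)\ge 2\rho-s/2\ge\rho$ and $\eta+(\lambda-\rho)\ge 2\lambda-s/2\ge\lambda$ (the latter two relying on $R_0\ge s$), yield
\EQQS{
(\rho+\lambda)^2-\eta^2\ \ge\ \tfrac{\lambda s}{2},\qquad \eta^2-(\rho-\lambda)^2\ \ge\ \rho\lambda.
}
For $n\ge 3$ this immediately produces $h(\eta,\rho,\lambda)\ge c_n\,\rho^{(n-3)/2}\lambda^{n-3}s^{(n-3)/2}$; for $n=2$ the negative exponent $-1/2$ forces the parallel upper bounds $(\rho+\lambda)^2-\eta^2\le 2\lambda s$ and $\eta^2-(\rho-\lambda)^2\le 4\rho\lambda$, which give the same scaling with a different constant. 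Combining with $\eta\in[\lambda,2\lambda]$, hence $\eta^{1-\gamma}\ge 2^{-\gamma}\lambda^{1-\gamma}$, and integrating over the $\eta$-window of length $s/4$ delivers
\EQQS{
I(\rho,\lambda)\ \ge\ C(n)\,2^{-\gamma}\,\rho^{(n-3)/2}\lambda^{n-2-\gamma}s^{(n-1)/2}.
}

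Substituting this back into the outer integral and invoking \eqref{basic-est} together with the monotonicity $(1+\rho+s)^{-2b}\ge (1+\lambda+s)^{-2b}$ (from $\rho\le\lambda$) and the bound $\rho^{(n-1)/2}\ge R_0^{(n-1)/2}\ge s^{(n-1)/2}$, the $\rho$-integration reduces to $\int_{R_0}^{\lambda}(\rho-R_0)^{2d}\,d\rho=(\lambda-R_0)^{2d+1}/(2d+1)$ and yields the intermediate estimate
\EQQS{
G_\gamma(u^2)(\lambda,s)\ \ge\ \frac{C(n)\,c^2\,s^{2a+(n-1)}\,(\lambda-R_0)^{2d+1}}{(2d+1)\,2^\gamma\,\lambda^\gamma\,(1+\lambda+s)^{2b}}.
}
Since $\lambda\ge R_0\ge s$ gives $(s/\lambda)^{(n-1)/2}\le 1$, multiplying the last inequality by this harmless factor transfers a weight $s^{(n-1)/2}$ from the numerator into an additional factor $\lambda^{-(n-1)/2}$ in the denominator, producing exactly \eqref{G-est}. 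The main obstacle is the geometric coordination of the two restrictions: the $\eta$-window must have length $\sim s$ (so that $\rho+\lambda-\eta$ produces a factor $s$ in the lower bound on $(\rho+\lambda)^2-\eta^2$), while simultaneously each of the four linear factors composing $h$ must admit a clean one-sided bound by powers of $\rho$, $\lambda$, $s$; the condition $R_0\ge s$ supplied by the geometry of $\Sigma$ is what makes these two requirements compatible, and the case $n=2$, where the exponent in $h$ is negative, has to be handled with the opposite one-sided bounds but leads to the same scaling.
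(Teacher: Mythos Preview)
Your argument is correct and follows a genuinely different route from the paper's. The paper integrates over the \emph{full} $\eta$-range $[\lambda-\rho,\lambda+\rho]$: for $n\ge 3$ it uses the pointwise bound $\eta\, h(\eta,\rho,\lambda)\ge \lambda^{(n-3)/2}(\eta+\rho-\lambda)^{n-2}(\rho+\lambda-\eta)^{(n-3)/2}$ and then evaluates the resulting integral exactly via the Beta function identity, producing a factor $C_1\rho^{(3n-5)/2}$; for $n=2$ it instead bounds $\eta\, h\ge \tfrac12\lambda^{-1/2}(\rho+\lambda-\eta)^{-1/2}$ and integrates explicitly. By contrast, you localize $\eta$ to a window of length $s/4$ near the upper endpoint $\rho+\lambda$, on which each of the four linear factors composing $h$ admits a clean one-sided bound by a fixed power of $\rho$, $\lambda$ or $s$; this avoids special functions entirely and handles all $n\ge 2$ with the same geometric scheme, merely reversing the direction of the inequalities when $n=2$. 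Your method in fact lands on the sharper intermediate bound with $s^{2a+(n-1)}/\lambda^{\gamma}$ in place of $s^{2a+(3n-3)/2}/\lambda^{(n-1)/2+\gamma}$, and the final multiplication by $(s/\lambda)^{(n-1)/2}\le 1$ is simply a harmless weakening to match the stated form. The paper's approach has the merit of discarding none of the $\eta$-domain and arriving directly at the exponents used in the subsequent iteration; your approach is more elementary and incidentally reveals that the power of $\lambda$ in \eqref{G-est} is not optimal.
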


\begin{proof}[Proof of Proposition \ref{prop:G-est}]
Set $C_0:=2^{3-n}\omega_{n-1}$. Since $(r,t)\in \Sigma$ and the estimates $\lambda\ge r-t+s$ and $s\le t$ hold, the inequality $\lambda-s\ge \max(R,\delta s)$ holds. Thus by the identity (\ref{G}) and the assumptions (\ref{basic-est}) and $\gamma\ge 0$, the estimates
\begin{align}
    &G_{\gamma}(u^2)(\lambda,s)\notag\\
    &\ge \frac{C_0}{\lambda^{n-2}}\int_{s+\max(R,\delta s)}^{\infty} \rho u^2(\rho,s)
    \left\{\int_{|\rho-\lambda|}^{\rho+\lambda}\eta^{1-\gamma}h(\eta,\rho,\lambda)d\eta \right\}d\rho\notag\\
    &\ge \frac{C_0c^2s^{2a}}{\lambda^{n-2}}\int_{s+\max(R,\delta s)}^{\infty}\frac{\rho\left\{\rho-s-\max(R,\delta s)\right\}^{2d}}{(1+\rho+s)^{2b}}\left\{\int_{|\rho-\lambda|}^{\rho+\lambda}\eta^{1-\gamma}h(\eta,\rho,\lambda)d\eta \right\}d\rho\notag\\
    &\ge \frac{C_0c^2s^{2a}}{2^{\gamma}\lambda^{n-2+\gamma}(1+\lambda+s)^{2b}}
\int_{s+\max(R,\delta s)}^{\lambda}\rho\left\{\rho-s-\max(R,\delta s)\right\}^{2d}\notag\\
 &\ \ \ \ \ \ \ \ \ \ \ \ \ \ \ \ \ \ \ \ \ \ \ \ \ \ \ \ \ \ \ \ \ \ \ \ \ \ \ \ \ \ \ \ \ \ \ \ \ \ \ \times\left\{\int_{\lambda-\rho}^{\lambda+\rho}\eta h(\eta,\rho,\lambda)d\eta \right\}d\rho\notag\\
 &\ge \frac{C_0c^2s^{2a+1}}{2^{\gamma}\lambda^{n-2+\gamma}(1+\lambda+s)^{2b}}
\int_{s+\max(R,\delta s)}^{\lambda}\left\{\rho-s-\max(R,\delta s)\right\}^{2d}\notag\\
 &\ \ \ \ \ \ \ \ \ \ \ \ \ \ \ \ \ \ \ \ \ \ \ \ \ \ \ \ \ \ \ \ \ \ \ \ \ \ \ \ \ \ \ \ \ \ \ \ \ \ \ \times\left\{\int_{\lambda-\rho}^{\lambda+\rho}\eta h(\eta,\rho,\lambda)d\eta \right\}d\rho
 \label{3-9}
\end{align}
hold. In the following, we divide the proof into the two cases, i.e. $n\ge 3$ and $n=2$.

  \par\noindent
  {\bf $\cdot$ Case 1:\ $n\ge 3$}. In the case of $0\le s+\max (R,\delta s)\le \rho\le \lambda$ and $0\le \lambda-\rho\le \eta$, the estimates
\[
   \eta\ge \eta-\lambda+\rho,\ \ \rho+\lambda+\eta\ge \lambda,\ \ \eta-\rho+\lambda\ge \eta+\rho-\lambda
\]
hold, which implies the inequalities
\begin{align}
    \eta h(\eta,\rho,\lambda)
    &=\eta(\eta+\rho-\lambda)^{\frac{n-3}{2}}(\eta-\rho+\eta)^{\frac{n-3}{2}}(\rho+\lambda+\eta)^{\frac{n-3}{2}}(\rho+\lambda-\eta)^{\frac{n-3}{2}}\notag\\
    &\ge \lambda^{\frac{n-3}{2}}(\eta+\rho-\lambda)^{n-2}(\rho+\lambda-\eta)^{\frac{n-3}{2}}
    \label{3-10}
\end{align}
hold. We note that for any $p,q>0$, the identity is well known;
\begin{equation}
\label{3-11}
     B(p,q)=\frac{\Gamma(p)\Gamma(q)}{\Gamma(p+q)},
\end{equation}
where $B:\R_{\ge 0}\times \R_{\ge 0}\rightarrow \R_{\ge 0}$ is the Beta function given by
\[
    B(p,q):=\int_0^1y^{p-1}(1-y)^{q-1}dy
\]
and $\Gamma$ is the Gamma function given by (\ref{gamma}).
By the identity (\ref{3-11}) and changing variables, the identities
\begin{align*}
\label{3-12}
     \int_{\alpha}^{\beta}(z-\alpha)^{p-1}(\beta-z)^{q-1}dz=(\beta-\alpha)^{p+q-1}B(p,q)
     =(\beta-\alpha)^{p+q-1}\frac{\Gamma(p)\Gamma(q)}{\Gamma(p+q)}
\end{align*}
hold for any $\alpha<\beta$ and $p,q>1$. By these identities with $\alpha=\lambda-\rho$, $\beta=\lambda+\rho$, $p=\frac{n-1}{2}$ and $q=n-1$, the identity
\begin{equation}
\label{3-12}
      \int_{\lambda-\rho}^{\lambda+\rho}(\rho+\lambda-\eta)^{\frac{n-3}{2}}(\eta-\lambda+\rho)^{n-2}d\eta=(2\rho)^{\frac{3n-5}{2}}\frac{\Gamma(\frac{n-1}{2})\Gamma(n-1)}{\Gamma\left(\frac{3(n-1)}{2}\right)}=:C_1\rho^{\frac{3n-5}{2}}
\end{equation}
holds, where $C_1=C_1(n)>0$ is a constant depending only on $n$.
By combining the estimates (\ref{3-9}), (\ref{3-10}) and (\ref{3-12}), the estimates
  \begin{align}
  &G_{\gamma}(u^2)(\lambda,s)\notag\\
  &\ge \frac{C_0c^2s^{2a+1}}{2^{\gamma}\lambda^{\frac{n-1}{2}+\gamma}(1+\lambda+s)^{2b}}
\int_{s+\max(R,\delta s)}^{\lambda}\left\{\rho-s-\max(R,\delta s)\right\}^{2d}\notag\\
 &\ \ \ \ \ \ \ \ \ \ \ \ \ \ \ \ \ \ \ \ \ \ \ \ \ \ \times\left\{\int_{\lambda-\rho}^{\lambda+\rho}(\rho+\lambda-\eta)^{\frac{n-3}{2}}(\eta-\lambda+\rho)^{n-2}d\eta \right\}d\rho\notag\\
 &=\frac{C_0C_1c^2s^{2a+1}}{2^{\gamma}\lambda^{\frac{n-1}{2}+\gamma}(1+\lambda+s)^{2b}}\int_{s+\max(R,\delta s)}^{\lambda}\rho^{\frac{3n-5}{2}}\left\{\rho-s-\max(R,\delta s)\right\}^{2d}d\rho\notag\\
 &\ge \frac{C_0C_1c^2s^{2a+\frac{3(n-1)}{2}}}{2^{\gamma}\lambda^{\frac{n-1}{2}+\gamma}(1+\lambda+s)^{2b}}\int_{s+\max(R,\delta s)}^{\lambda}\left\{\rho-s-\max(R,\delta s)\right\}^{2d}d\rho\notag\\
 &=\frac{C_0C_1c^2s^{2a+\frac{3(n-1)}{2}}\left\{\lambda-s-\max(R,\delta s)\right\}^{2d+1}}{(2d+1)2^{\gamma}\lambda^{\frac{n-1}{2}+\gamma}(1+\lambda+s)^{2b}}
  \end{align}
hold, which implies (\ref{G-est}) with $C:=C_0C_1$.

    \par\noindent
    {\bf $\cdot$Case 2: $n=2$}. In the case of $0\le \rho\le \lambda$ and $\eta\in (\lambda-\rho,\lambda+\rho)$, the estimates
\[
   \eta^2-(\rho-\lambda)^2\le \eta^2\ \ \text{and}\ \ \ (\rho+\lambda+\eta)^{\frac{1}{2}}\le 2\lambda^{\frac{1}{2}}
\]
hold, which implies the inequalities
\[
    \eta h(\eta,\rho,\lambda)=\eta\left\{\eta^2-(\rho-\lambda)^2\right\}^{-\frac{1}{2}}(\rho+\lambda+\eta)^{-\frac{1}{2}}(\rho+\lambda-\eta)^{-\frac{1}{2}}\ge 2^{-1}\lambda^{-\frac{1}{2}}(\rho+\lambda-\eta)^{-\frac{1}{2}}
\]
hold. By these estimates, the inequalities
\[
    \int_{\lambda-\rho}^{\lambda+\rho}\eta h(\eta,\rho,\lambda)d\eta
    \ge 2^{-1}\lambda^{-\frac{1}{2}}\int_{\lambda-\rho}^{\lambda+\rho}(\rho+\lambda-\eta)^{-\frac{1}{2}}d\eta=\lambda^{-\frac{1}{2}}(2\rho)^{\frac{1}{2}}
\]
hold. By combining this estimate and (\ref{3-9}), the inequalities
\begin{align*}
    &G_{\gamma}(u^2)(\lambda,s)\\
    &\ge \frac{\sqrt{2}C_0c^2s^{2a+1}}{2^{\gamma}\lambda^{\gamma+\frac{1}{2}}(1+\lambda+s)^{2b}}
\int_{s+\max(R,\delta s)}^{\lambda}\rho^{\frac{1}{2}}\left\{\rho-s-\max(R,\delta s)\right\}^{2d}d\rho\\
&\ge \frac{\sqrt{2}C_0c^2s^{2a+\frac{3}{2}}}{2^{\gamma}\lambda^{\gamma+\frac{1}{2}}(1+\lambda+s)^{2b}}
\int_{s+\max(R,\delta s)}^{\lambda}\left\{\rho-s-\max(R,\delta s)\right\}^{2d}d\rho\\
&=\frac{\sqrt{2}C_0c^2s^{2a+\frac{3}{2}}\left\{\lambda-s-\max(R,\delta s)\right\}^{2d+1}}{(2d+1)2^{\gamma}\lambda^{\gamma+\frac{1}{2}}(1+\lambda+s)^{2b}}
\end{align*}
hold, which implies (\ref{G-est}) with $C:=\sqrt{2}C_0$, which completes the proof of the proposition.
\end{proof}

\subsection{Complete of the proof of Theorem \ref{T.1.1} in high spatial dimension $n\ge 2$}

Now we give a proof of Theorem \ref{T.1.1}.
\begin{proof}[Proof of Theorem \ref{T.1.1}]
Let $T>0$ and $u=u(r,t)\in C([0,\infty)\times [0,T))$ be a radial solution to (\ref{IE_u_depend}) with the data $(f,g)$ satisfying the assumptions in Theorem \ref{T.1.1}. Let $(r,t)\in \Sigma(T,R,\delta)$. We assume that the estimate (\ref{basic-est}) holds for any $(r,t)\in \Sigma$ and some $a,b,c,d$. Then by the assumptions on the data $(f,g)$, the positivity (\ref{positive-high}) of the solution and inserting the estimates (\ref{G-est}) and (\ref{basic-est}) into the second term of (\ref{positive}), the estimates
\begin{align*}
    u(r,t)&\ge \frac{Cc^3}{8(2d+1)r^m}\int_0^t\frac{s^{3a+\frac{3n-3}{2}}}{(1+s)^2}\left\{\int_r^{r+t-s}\frac{\lambda^{m-\frac{n-1}{2}-\gamma}\left\{\lambda-s-\max(R,\delta s)\right\}^{3d+1}}{(1+s+\lambda)^{3b}}d\lambda\right\}ds\\
    &\ge \frac{Cc^3\left\{r-t-\max(R,\delta t)\right\}^{3d+1}}{8(2d+1)(1+r+t)^{3b+\frac{n+3}{2}+\gamma}}\int_0^ts^{3a+\frac{3n-3}{2}}(t-s)ds\\
    &=\frac{Cc^3t^{3a+\frac{3n+1}{2}}\left\{r-t-\max(R,\delta t)\right\}^{3d+1}}{8(2d+1)\left\{3a+(3n+1)/2\right\}^2(1+r+t)^{3b+\frac{n+3}{2}+\gamma}}
\end{align*}
hold. We introduce the sequences $\{a_j\}_{j\in \N}$, $\{b_j\}_{j\in \N}$, $\{c_j\}_{j\in \N}$, $\{d_j\}_{j\in \N}$, which are defined by
\EQS{
\label{a_j}
  a_{j+1}&=3a_j+\frac{3n+1}{2}\quad (j\in\N), &&a_1:=1,\\
\label{b_j}
  b_{j+1}&=3b_j+\frac{n+3}{2}+\ga \quad (j\in \N), &&b_1:=1+\nu,\\
\label{c_j}
  c_{j+1}&=\frac{Cc_j^3}{8(2d_j+1)\{3a_j+(3n+1)/2\}^2}\quad (j\in\N), &&c_1:=\frac{A\e}{8},\\
\label{d_j}
  d_{j+1}&=3d_j+1\quad (j\in\N), &&d_1:=0.
}
By solving the difference equations (\ref{a_j}), (\ref{b_j}) and (\ref{d_j}), the identities
\EQS{
\label{res-seq1}
a_j&=3^{j-1}\left(\frac{3n+5}{4}\right)-\frac{3n+1}{4},\\
\label{res-seq2}
b_j&=3^{j-1}\left(\frac{7+4\nu+2\gamma+n}{4}\right)-\frac{2\gamma+n+3}{4},\\
\label{res-seq3}
d_{j}&=\frac{3^{j-1}}{2}-\frac{1}{2}
}
hold for any $j\in \N$. By these identities and the relation (\ref{c_j}), the estimate
 \begin{equation}
 \label{c_j2}
  c_{j+1}\ge \frac{Dc_j^3}{3^{3j}}
\end{equation}
holds for any $j\in \N$, where $D=D(n)>0$ is a constant given by
\[
   D:=\frac{6C}{(3n+5)^2}.
\]
By the estimate (\ref{c_j2}), the inequality
\[
    \log c_{j+1}-\frac{3}{2}(\log 3)(j+1)+\frac{1}{2}\log D-\frac{3}{4}\log 3\ge 3\left\{\log c_j-\frac{3}{2}(\log3)j+\frac{1}{2}\log D-\frac{3}{4}\log 3 \right\}
\]
holds for any $j\in \N$, which implies that the the estimate
\begin{equation}
\label{C_j-ind}
    c_j\ge D^{-\frac{1}{2}}\exp\left(3^{j-1}\log\left(c_13^{-\frac{9}{4}}D^{\frac{1}{2}}\right)\right)
\end{equation}
holds for any $j\in \N$.

Here we assume that the existence time $T$ satisfies
\[
     T>\max(R/\delta,1).
\]
Then we can define the following half line $\ell$ in the region $\Sigma(T,R,\delta)$, which is given by
\[
   \ell:=\left\{(r,t)\in (0,\infty)\times (0,T)|\ t\ge \max(R/\delta,1),\ r=2(1+\delta)t\right\}\subset \Sigma.
\]
Then for any $(r,t)\in \ell$, the estimates
\[
r-t-\max\{R,\delta t\}=(1+\delta)t\ge t , \ 1+r+t\le t+2(1+\delta)t+t=2t(2+\delta)
\]
hold. Here we remember the definition of $c_1$, i.e. $c_1:=A\varepsilon 2^{-3}$. By combining the estimates (\ref{basic-est}) with $a=a_j$, $b=b_j$, $c=c_j$ and $d=d_j$, (\ref{res-seq1}), (\ref{res-seq2}), (\ref{res-seq3}) and (\ref{C_j-ind}), the estimates
  \[
\begin{array}{lll}
  u(2(1+\delta)t,t)&>&D^{-1/2}\exp\left(3^{j-1}\left(\log (c_1 3^{-\frac{9}{4}}D^{1/2})\right)\right)
t^{3^{j-1}\left(\frac{n-\gamma-2\nu}{2}\right)}t^{\frac{\gamma-n}{2}}\\
&=:&\d D^{-1/2}\exp\left(3^{j-1}K(t)\right)t^{\frac{\gamma-n}{2}}
\end{array}
 \]
hold, for any $j\in \N$, where the function $K$ is defined by
  \[
 K(t):=\d \log\left(\e D^{1/2}A2^{-3}3^{-\frac{9}{4}}t^{\frac{n-\gamma-2\nu}{2}}\right)
 \]
for $\max(R/\delta,1)\le t\le T$.

Here we can define $B=B(n,\gamma,\nu,A)$ as
\[
B:=(D^{1/2}A2^{-3}3^{-\frac{9}{4}})^{-\frac{2}{n-\gamma-2\nu}}
\]
due to $n-\gamma-2\nu>0$. Moreover we can take $\e_0=\e_0(n,\gamma,R,g)>0$ such that the estimate
\[
B\e_0^{-\frac{2}{n-\gamma-2\nu}}\ge \max(R/\delta,1)
\]
holds.
On the contrary, for a fixed $\e\in (0,\e_0)$, we suppose that the lifespan $T_{\varepsilon}$ satisfies
\begin{equation}
\label{lifespan-1}
T_{\varepsilon}>B\e ^{-\frac{2}{n-\gamma-2\nu}}\ (\ge \max(R/\delta,1)).
\end{equation}
Then the estimate $K(T)>0$ holds for any $T\in \left(B\varepsilon^{-\frac{2}{n-\gamma-2\nu}},T_{\varepsilon}\right)$, which implies $u(2(1+\delta)T,T)\rightarrow \infty$ as  $j\rightarrow\infty$. This is a contradiction. Thus for any $\varepsilon\in (0,\varepsilon_0)$, the estimate $T_\e\le B\e ^{-2/(n-\gamma-2\nu)}$ holds, which completes the proof of Theorem \ref{T.1.1} for $n\ge2$.
\end{proof}

\section{Proof of Theorem \ref{T.1.1} in one spatial dimension}
\label{oned}
In this section, we give a proof of Theorem \ref{T.1.1} in one spatial dimension ($n=1$). To do so, we recall that the integral equation (\ref{IE_u_depend}) in one spatial dimension is written as
\begin{align*}
     u(x,t)&=\frac{\varepsilon}{2}\left[\left\{f(x+t)-f(x-t)\right\}+\int_{x-t}^{x+t}g(y)dy\right]\\
     &+\int^{t}_{0}\frac{1}{(1+s)^2}\left\{\int_{x-t+s}^{x+t-s}\left(\int_{\R}\frac{u^2(z,s)}{|y-z|^{\gamma}}dz\right)u(y,x)dy\right\}ds
\end{align*}
on $\R^n\times [0,T)$. Moreover if $f\equiv 0$, then the integral equation becomes
\begin{align}
\label{int-one}
u(x,t)=&\frac{\varepsilon}{2}\int_{x-t}^{x+t}g(y)dy
+\int^{t}_{0}\frac{1}{(1+s)^2}\left\{\int_{x-t+s}^{x+t-s}\left(\int_{\R}\frac{u^2(z,s)}{|y-z|^{\gamma}}dz\right)u(y,x)dy\right\}ds.
\end{align}
The following lemma means the positivity of the solution to (\ref{int-one}).
\begin{lem}[Positivity]
Let $\varepsilon>0$, $(f,g)\in C^1(\R)\times C^0(\R)$ satisfy the assumptions of Theorem \ref{T.1.1}, $T>0$ and $u=u(x,t)\in C(\R\times [0,T))$ be a solution to (\ref{IE_u_depend}). Then the estimate
\begin{equation}
\label{positive-one}
u>0\quad \mbox{in}\quad \{(x,t)\in(0,\infty)^2:  x-t\ge R\}
\end{equation}
holds.
\end{lem}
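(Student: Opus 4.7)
The plan is to write (\ref{int-one}) as $u=L+N$, with linear part $L(x,t):=\frac{\varepsilon}{2}\int_{x-t}^{x+t}g(y)\,dy$ and cubic-convolution part $N(x,t)$, and to exploit three simple sign observations on $\Omega:=\{(x,t)\in(0,\infty)^2 : x-t\ge R\}$. First, for $(x,t)\in\Omega$ the interval $[x-t,x+t]$ lies in $[R,\infty)$, so the assumption (\ref{asm-blowup2}) yields $L(x,t)\ge \varepsilon A t/(1+x+t)^{1+\nu}>0$ whenever $t>0$. Second, the kernel $|y-z|^{-\gamma}$ is non-negative, so $(V_\gamma*u^2)(y,s)\ge 0$ pointwise. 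Third, the integration variables $(y,s)$ of $N(x,t)$ satisfy $s\in[0,t]$ and $y\in[x-t+s,x+t-s]$, so $y-s\ge x-t\ge R$, placing $(y,s)$ in $\overline\Omega$.

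Having these in hand, I run a continuity/bootstrap argument. Set
\[
T^*:=\sup\bigl\{\tau\in[0,T): u(y,s)\ge 0 \text{ for all } (y,s) \text{ with } 0\le s\le\tau \text{ and } y-s\ge R\bigr\}.
\]
Since $f\equiv 0$ forces $u(\cdot,0)\equiv 0$, the defining condition holds trivially at $\tau=0$, so $T^*\ge 0$ is well-defined. For any $(x,t)\in\Omega$ with $0<t\le T^*$, the third observation places the integration domain of $N(x,t)$ inside $\overline\Omega\cap\{s\le T^*\}$, where $u\ge 0$ by definition of $T^*$; combined with the second observation, the integrand of $N(x,t)$ is non-negative, so $N(x,t)\ge 0$, and the first observation upgrades this to $u(x,t)\ge L(x,t)>0$.

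To conclude $T^*=T$, suppose for contradiction $T^*<T$. Splitting $N(x,t)=N_{[0,T^*]}(x,t)+N_{[T^*,t]}(x,t)$, the first summand stays non-negative by the reasoning above, while the second vanishes as $t\downarrow T^*$ on compact subsets of $\R$ by continuity of $u$ and finiteness of $V_\gamma*u^2$ (guaranteed since $u$ satisfies (\ref{IE_u_depend})); combined with $L(x,T^*)>0$, this extends $u\ge 0$ to a small time-strip past $T^*$, contradicting maximality. The main obstacle I anticipate is making this extension uniform in the unbounded spatial direction $x\to\infty$, where both $L$ and $N_{[T^*,t]}$ decay; the expected remedy is that only the non-strict inequality $u\ge 0$ needs to propagate past $T^*$ (strict positivity then follows automatically from $u\ge L>0$), and this non-strict propagation reduces to the pointwise implication ``$u\ge 0$ on the integration domain $\Rightarrow N\ge 0$'', which is insensitive to the spatial size of $x$.
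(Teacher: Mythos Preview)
Your three sign observations are exactly the content of the proof, and the paper uses the same ones. The gap is in the extension step of your bootstrap, and your proposed remedy is circular. To show $u\ge 0$ for $t$ slightly past $T^*$ you invoke ``$u\ge 0$ on the integration domain $\Rightarrow N\ge 0$'', but the integration domain of $N(x,t)$ for $t>T^*$ already contains times $s\in(T^*,t]$, where the sign of $u$ is precisely what you are trying to establish; the sentence about non-strict propagation just restates this implication without breaking the loop. The underlying difficulty is that your $T^*$ is defined on the unbounded region $\{y-s\ge R\}$, so a quantitative comparison of $L(x,T^*)$ against $|N_{[T^*,t]}(x,t)|$ would have to be uniform as $x\to\infty$, and you have no a priori spatial control on $u$ to guarantee that.

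The clean fix --- which is the paper's (Keller's) argument --- is to localize. Fix any $(x_0,t_0)\in\Omega$ and work on its \emph{compact} backward cone $\Gamma(x_0,t_0):=\{(y,s): |y-x_0|\le t_0-s,\ 0\le s\le t_0\}$. Your third observation gives $\Gamma(x_0,t_0)\subset\overline\Omega$, and for any $(x_1,t_1)\in\Gamma(x_0,t_0)$ the integration domain of $N(x_1,t_1)$ is the sub-cone $\Gamma(x_1,t_1)\subset\Gamma(x_0,t_0)$. Set $t_1:=\inf\{s>0: u(y,s)=0 \text{ for some }(y,s)\in\Gamma(x_0,t_0)\}$. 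Compactness of the base interval together with $u_t(\cdot,0)=\varepsilon g>0$ forces $t_1>0$; if $t_1\le t_0$, compactness and continuity produce an actual first zero $(x_1,t_1)$, at which your first and second observations (applied on $\Gamma(x_1,t_1)\cap\{s<t_1\}$, where $u\ge 0$) yield $0=u(x_1,t_1)=L(x_1,t_1)+N(x_1,t_1)>0$, a contradiction. Hence $u(x_0,t_0)>0$. This is exactly your argument with the unbounded time-strip replaced by a compact cone --- the localization is the only missing ingredient.
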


The positivity of $u$ follows from the comparison argument
by Keller\cite{K57}. For convenience of the readers, we give a proof of this lemma in Appendix.

Now we prove Theorem \ref{T.1.1}.

\begin{proof}[Proof of Theorem \ref{T.1.1} with $d=1$]
Let $T>0$ and $u\in C(\R^1\times[0,T))$ be a solution to (\ref{int-one}). By the assumption (\ref{asm-blowup2}) on the data $g$, the estimates
\begin{equation}
\label{4-2-a}
u(x,t)\ge \frac{A\e}{2}\int_{x-t}^{x+t}\frac{1}{(1+y)^{1+\nu}}dy\ge
\frac{A\e}{2}\int_{x}^{x+t}\frac{1}{(1+y)^{1+\nu}}dy
\ge\frac{A\e t}{2(1+x+t)^{1+\nu}}
\end{equation}
hold for any $(x,t)\in \R^n\times [0,T)$.

We assume that the following estimate
  \begin{equation}
  \label{basic-est1}
  u(x,t)\ge \frac{ct^a(x-t-R)^d}{(1+x+t)^b}
  \end{equation}
holds for any $(x,t)\in \R\times [0,T)$ with $x-t\ge R$, where $a,b=b(\gamma,\nu),c=c(A,\varepsilon),d$ are non-negative constants. From the estimate (\ref{4-2-a}), we see that the estimate (\ref{basic-est1}) holds with $a=1$, $b=1+\nu$, $c=A\e/2$, $d=0$.

Next we estimate the convolution term. For $(x,t)\in \R^n\times [0,T)$ with $x-t\ge R$, $s\in (0,t)$ and $y\in (x-t+s,x+t-s)$, the estimates
\[
    s+R\le x-t+s<y
\]
hold. Moreover in the case of $z\ge s+R$, by the assumption $\gamma>0$, the estimate
\[
    (y-z)^{\gamma}\le (1+y+s)^{\gamma}
\]
holds. By this estimate and the estimate (\ref{basic-est1}), the estimates
\begin{align}
\int_{\R}\frac{u^2(z,s)}{|y-z|^{\gamma}}dz&\ge
c^2s^{2a}\int_{s+R}^{y}\frac{(z-s-R)^{2d}}{(y-z)^{\gamma}(1+z+s)^{2b}}dz\notag\\
&\ge \frac{c^2s^{2a}}{(1+y+s)^{2b+\gamma}}\int_{s+R}^{y}(z-s-R)^{2d}dz\notag\\
&=\frac{c^2s^{2a}(y-s-R)^{2d+1}}{(2d+1)(1+y+s)^{2b+\gamma}}
\label{conv-est-one}
\end{align}
hold for any $s\in (0,t)$ and $y\in (x-t+s,x+t-s)$. Noting that the inequalities
\[
   1+s\le 1+t\le 2(1+x+t)
\]
hold for $(x,t)\in \R^n\times [0,T)$ with $x-t\ge R$
and $s\in (0,t)$, by combining the estimates (\ref{int-one}), (\ref{basic-est1}) and (\ref{conv-est-one}) and the assumption (\ref{asm-blowup2}) on the data $g$, the inequalities
\begin{align}
    u(x,t)&\ge \frac{c^3}{2d+1}\int_0^t\frac{s^{3a}}{(1+s)^2}\left\{\int_{x-t+s}^{x+t-s}\frac{(y-s-R)^{3d+1}}{(1+y+s)^{3b+\gamma}}dy\right\}ds\notag\\
    &\ge \frac{c^3(x-t-R)^{3d+1}}{4(2d+1)(1+x+t)^{3b+\gamma+2}}\int_0^ts^{3a}\left(\int_{x-t+s}^{x+t-s}dy\right)ds\notag\\
   &\ge \frac{c^3(x-t-R)^{3d+1}}{2(2d+1)(1+x+t)^{3b+\gamma+2}}\int_0^ts^{3a}(t-s)ds\notag\\
   &= \frac{c^3(x-t-R)^{3d+1}t^{3a+2}}{2(3a+1)(3a+2)(2d+1)(1+x+t)^{3b+\gamma+2}}
\end{align}
hold for any $(x,t)\in \R^n\times[0,T)$ with $x-t\ge R$.

Similarly to the argument of $n\ge2$, we next define the sequences like (\ref{a_j}) to (\ref{d_j}).
The sequences are same as by setting $n=1$ in (\ref{a_j}) to (\ref{d_j}).
with $c_1=A\e/2$.
Thus, we have
\[
\begin{array}{lll}
  u(2t+R,t)&>&CD^{-1/2}\exp\{3^{j-1}(\log (c_1 3^{-3s}D^{1/2}))\}
t^{3^{j-1}\{(1-\gamma-2\nu)/2\}}t^{(\gamma-1)/2}\\
&=&\d  CD^{-1/2}\exp\{3^{j-1}\wt{K}(t)\}t^{(\gamma-1)/2},
\end{array}
 \]
for $\max\{R,1\}\le t\le T$, where
 \[
 \wt{K}(t)=\d \log(\e D^{1/2}A2^{-3}3^{-3s}t^{(1-\gamma-2\nu)/2})
 \]
for $\max\{R/\delta,1\}\le t\le T$.
Similarly to the argument of $n\ge2$, there exists
a positive constant $\wt{B}=\wt{B}(\gamma,\nu)$ such that
$T_\e\le \wt{B}\e ^{-2/(1-\gamma-2\nu)}$ holds for small $\e$.
Thus the proof of Theorem \ref{T.1.1} is now completed for $n=1$.
\end{proof}



\section{Appendix}
\label{appendix1}
\ \ \ \
In this appendix, we give a proof of Proposition \ref{lem:positive}. In high spatial dimension ($n\ge 2$), we assume that
\begin{equation}
\label{asm-blow-up}
f\equiv0 \ \mbox{and} \ g \ \mbox{is radially symmetric if $n\ge2$}.
\end{equation}
Then the solution $u(\cdot,t)$ to (\ref{IE_u_depend}) is a radially symmetric function.
From Lemma 2.2 and Lemma 2.3 in \cite{T95}, we see that $u$ satisfies the following integral equation (\ref{int-odd}) or (\ref{int-even}).
When $n=2m+1$ $(m\in\N)$, we get
\begin{equation}
\label{int-odd}
u(r,t)=\frac{1}{2r^m}I(r,t,u_t(\cdot,0))+\frac{1}{2r^m}\int_{0}^{t}
I\left(r,t-s,\frac{G_{\gamma}(u^2)(\cdot,s)u(\cdot,s)}{(1+s)^2}\right)ds,
\end{equation}
where
\[
I(r,t,w(\cdot,\tau))=\int_{|r-t|}^{r+t}\lambda^mw(\lambda,\tau)
P_{m-1}\left(\frac{\lambda^2+r^2-t^2}{2r\lambda}\right)d\lambda.
\]
When $n=2m$ $(m\in\N)$, we get
\begin{equation}
\label{int-even}
u(r,t)=\frac{2}{\pi r^{m-1}}J(r,t,u_t(\cdot,0))+\frac{1}{\pi r^{m-1}}
\int_{0}^{t}J\left(r,t-s,\frac{G_{\gamma}(u^2)(\cdot,s)u(\cdot,s)}{(1+s)^2}\right)ds,
\end{equation}
where
\[
J(r,t,w(\cdot,s))=\int_{0}^{t}\frac{\rho d\rho}{\sqrt{t^2-\rho^2}}
\int_{|r-\rho|}^{r+\rho}\frac{\lambda^mw(\lambda,s)
T_{m-1}\left(\frac{\lambda^2+r^2-t^2}{2r\lambda}\right)}
{\sqrt{\lambda^2-(r-\rho)^2}\sqrt{(r+\rho)^2-\lambda^2}}d\lambda.
\]
Here $P_k$ and $T_k$ for $k\in\N\cup\{0\}$ denote the Legendre and Tschebyscheff polynomials of degree $k$ respectively, whose definitions can be seen in Lemmas 2.2 and 2.3 in \cite{T95}, respectively.

We prove Proposition \ref{lem:positive}. The argument
is similar to the one of Lemma 4.1 in \cite{TUW10}.
\begin{proof}[Proof of (\ref{positive-high}) and (\ref{positive-one}) in Proposition \ref{lem:positive} ]

Set $\delta=2/\delta_m$. Here $\delta_m$ is a positive constant
which satisfies
\[
P_{m-1}(z), \ T_{m-1}(z)\ge \frac{1}{2} \quad \mbox{for} \ \frac{1}{1+\delta_m}\le z \le 1.
\]
$P_m$ and $T_m$ are defined in (\ref{int-odd}) and (\ref{int-even}) respectively.
We define
\[
\Gamma(r,t)=\{(r,t)\in(0,\infty)^2:\ |r-\lambda|\le t-s\}.
\]
We note that $\Gamma(r_0,t_0)\subset \Sigma$ holds for any fixed $(r_0,t_0)\in\Sigma$.
We also set
\[
t_1=\inf\{t>0:\ u(r,t)=0, \ (r,t)\in\Gamma(r_0,t_0)\}.
\]
From the positivity assumption for $g(r)(=u_t(r,0)>0)$, we have $t_1>0$.

Assume that there exists $r_1>0$ such that $u(r_1,t_1)=0$ and $(r_1,t_1)\in \Gamma(r_0,t_0)$.
\vskip5pt
\par\noindent
{\bf Case $n=2m+1$.}

First of all, we note that
\[
\frac{\lambda^2+r^2-(t-s)^2}{2r\lambda}\ge \frac{(r-t+s)^2+r^2-(t-s)^2}{2r(r+t-s)}
=\frac{r-t+s}{r+t-s}\ge\frac{r-t}{r+t}
\]
holds for $(\lambda,s)\in\Gamma(r,t)$.
Thus, if $(r,t)\in\Sigma$, that is $r-t\ge (2/\delta_m)t$, then we have
\begin{equation}
\label{variable1}
\frac{r-t}{r+t}\ge \frac{1}{1+\delta_m}.
\end{equation}

By the definition of $t_1$, we have $u>0$ in $\Gamma(r_1,t_1)\setminus\{(r_1,t_1)\}$ which is the Duhamel term of (\ref{int-odd}) with $(r,t)=(r_1,t_1)$.
Then we have
\[
I(r_1,t_1-s,G_{\gamma}(u^2)(\cdot,s)u(\cdot,s))\ge\frac{1}{2}
\int_{r_1-t_1+s}^{r_1+t_1-s}\lambda^m G_{\gamma}(u^2)(\lambda,s)u(\lambda,s)d\lambda\ge0
\]
for $0\le s\le t_1$ by (\ref{variable1}).
It follows from $(r,t)=(r_1,t_1)$ in (\ref{int-odd}) that
\[
0=u(r_1,t_1)\ge\frac{1}{4r_1^m}\int_{r_1-t_1}^{r_1+t_1}\lambda^mu_t(\lambda,0)d\lambda>0,
\]
which leads contradiction. Therefore, we have $u>0$ in $\Sigma$.
\vskip5pt
\par\noindent
{\bf Case $n=2m$.}

Similarly to the case of $n=2m+1$, we note that
\[
\frac{\lambda^2+r^2-\rho^2}{2r\lambda}\ge\frac{r-\rho}{r+\rho}\ge \frac{r-t}{r+t}
\]
holds for $r-\rho\le \lambda\le r+\rho$ and $0\le\rho\le t$.
Similarly to the argument in the case of $n=2m+1$, we have
\[
\begin{array}{lllll}
J(r_1,t_1-s,G_{\gamma}(u^2)(\cdot,s)u(\cdot,s))\\
\d \ge \frac{1}{2}\int_{0}^{t_1-s}\frac{\rho d\rho}{\sqrt{(t_1-s)^2-\rho^2}}\int_{r_1-\rho}^{r_1+\rho}
\frac{\lambda^mG_{\gamma}(u^2)(\lambda,s)u(\lambda,s)d\lambda}{\sqrt{\lambda^2-(r_1-\rho)^2}
\sqrt{(r_1+\rho)^2-\lambda^2}}\ge0
\end{array}
\]
for $0\le s\le t_1$ by (\ref{variable1}).
Therefore, we obtain the following contradiction:
\[
\begin{array}{lllll}
0&=&u(r_1,t_1)\\
\d &\ge&\d\frac{1}{\pi r_1^{m-1}}
\int_{0}^{t_1}\frac{\rho d\rho}{\sqrt{t_1^2-\rho^2}}\int_{r_1-\rho}^{r_1+\rho}
\frac{\lambda^mu_t(\lambda,0)d\lambda}{\sqrt{\lambda^2-(r_1-\rho)^2}
\sqrt{(r_1+\rho)^2-\lambda^2}}>0.
\end{array}
\]
Therefore, we have $u>0$ in $\Sigma$.
In the case of one dimensional case, we note that (\ref{sol-one}) is
positive, so we do not have to assume the condition like $r-t\ge (2/\delta_m)t$.
The proof is the same as the one of the above argument, we omit its proof.
\end{proof}

\section*{Acknowledgement}
\par
The first author is supported by JST CREST Grant Number JPMJCR1913, Japan.
The first and third authors have been partially supported by
the Grant-in-Aid for Scientific Research (B) (No.18H01132) and Young Scientists Research (No.19K14581), Japan Society for the Promotion of Science.
The second author has been supported by RIKEN Junior Research Associate Program.
The third author has been partially supported
by the Grant-in-Aid for Scientific Research (B) (No.19H01795), Japan Society for the Promotion of Science.


\bibliographystyle{plain}

\end{document}